\def\bs{\backslash}
\def\cU{\mathcal{U}}
\def\bs{\backslash}
\newtheorem{theorem}{Theorem}[section]
\numberwithin{equation}{section}
\newtheorem{lemma}[theorem]{Lemma}
\newtheorem{proposition}[theorem]{Proposition}
\newtheorem{remark}[theorem]{Remark}
\numberwithin{equation}{section}
\def\N{\mathbb{N}}
\def\Z{\mathbb{Z}}
\def\R{\mathbb{R}}
\def\C{\mathcal{C}}
\def\V{\mathcal{V}}
\def\cL{\mathcal{L}}
\def\cR{\mathcal{R}}
\def\cO{\mathcal{O}}
\def\cF{\mathcal{F}}
\def\cE{\mathcal{E}}
\def\NN{\mathcal{N}}
\def\bE{\mathbb{E}}
\def\XX{\mathcal{X}}
\def\bP{\mathbb{P}}
\renewcommand{\phi}{\varphi}
\renewcommand{\epsilon}{\varepsilon}
\def\G{\mathcal{G}}
\def\cII{\mathcal{I}}
\newcommand{\1}{{\text{\Large $\mathfrak 1$}}}
\newcommand{\var}{\operatorname{var}}
\def\reff#1{(\ref{#1})}
\newcommand{\tn}{|\kern-.1em|\kern-0.1em|}
\def\bP{\mathbb{P}}
\def\capa{\text{cap}}
\def\tilde{\widetilde}
\begin{document}
\title{\bf Extracting subsets maximizing capacity \\
and Folding of Random Walks}

\author{Amine Asselah \thanks{
LAMA, Univ Paris Est Creteil, Univ Gustave Eiffel, UPEM, CNRS, F-94010, Cr\'eteil, France; amine.asselah@u-pec.fr} \and
Bruno Schapira\thanks{Aix-Marseille Universit\'e, CNRS, Centrale Marseille, I2M, UMR 7373, 13453 Marseille, France;  bruno.schapira@univ-amu.fr} 
}
\date{}
\maketitle
\begin{abstract}
We prove that in any finite set of $\Z^d$ with $d\ge 3$, 
there is a subset whose capacity and volume are both
of the same order as the capacity of the initial set. 
As an application we obtain estimates on the probability
of {\it covering uniformly} a finite set, and characterize some {\it folding} events, under optimal hypotheses. 
For instance, knowing that a region of space has an {\it atypically high occupation density} by some random walk, 
we show that this random region is most likely ball-like.\\

\noindent \emph{Keywords and phrases.} Random Walk; Local times; Capacity; Range. \\
MSC 2010 \emph{subject classifications.} Primary 60F05, 60G50.
\end{abstract}

\begin{center}
Nous montrons que de tout ensemble fini de $\Z^d$ en dimension
trois et plus, on peut extraire un sous-ensemble dont la capacité
et le volume sont d'ordre de la capacité de l'ensemble initial.
Cette observation nous permet d'obtenir, sous des hypothèses
optimales, des estimées de la probabilité qu'une marche aléatoire
{\it recouvre uniformément} un ensemble fini, et
de caractériser certains événements de repliement de la marche.
Par exemple, lorsqu'on sait que la marche produit une densité
d'occupation grande dans une région, alors celle-ci a la forme d'une boule.
\newline
\newline
\emph{Mots cl\'es.} Temps Locaux; capacit\'e; marche al\'eatoire.
\end{center}

\section{Introduction}\label{sec-intro}
This note deals with {\it capacity} in the context of a random
walk on $\Z^d$, with $d\ge 3$. 
If $\bP_x$ is the law of the random walk starting from $x$, 
and $H_\Lambda^+$ is its return time to $\Lambda\subset \Z^d$,
then the capacity of $\Lambda$ is
\begin{equation}\label{def-capa}
\capa(\Lambda):=\sum_{x\in \Lambda} \bP_x\big(H^+_{\Lambda}=\infty\big).
\end{equation}

Our main observation is that in any finite set of $\Z^d$, say made of
disjoint balls with common radius $r$, there exists a subset whose size
and capacity are both of order the capacity of the initial set.
To state precisely this result, let us introduce the needed
notation. For $x\in \Z^d$, and $r\ge 1$, 
we define $B_r(x)= \{z\in \Z^d : \|z-x\|< r\}$, 
with $\|\cdot \|$ the Euclidean norm, and for $\C\subset \Z^d$, we let 
$B_r(\C):= \cup_{x\in \C} B_r(x)$. 

\begin{theorem}\label{prop-main} Assume $d\ge 3$.
There exists $\alpha>0$, 
such that for any $r\ge 1$ and any finite $\C\subset \Z^d$, there is
a subset $U\subseteq \C$, satisfying 
\begin{equation}\label{ineq-cor}
(i)\quad\capa(B_r(U))\ge \alpha\cdot  r^{d-2} |U|\quad\text{and}\quad
(ii)\quad r^{d-2} |U|\ge\alpha\cdot \capa(B_r(\C)).
\end{equation}
\end{theorem}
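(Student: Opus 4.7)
I will construct $U\subseteq\C$ enjoying the quantitative separation
\begin{equation*}
\sum_{y\in U\setminus\{x\}}\Big(\frac{r}{\|x-y\|}\Big)^{d-2}\le c_0\qquad\forall\,x\in U,
\end{equation*}
for a small enough absolute constant $c_0$. Given this, (i) follows at once by testing $\capa(B_r(U))$ against the measure $\nu=\sum_{x\in U}e_{B_r(x)}$: its total mass is $\sum_{x\in U}\capa(B_r(x))\asymp r^{d-2}|U|$, and for $z\in B_r(x)$ the standard hitting-probability estimate gives
\begin{equation*}
G\nu(z)\le 1+\sum_{y\in U\setminus\{x\}}\bP_z(H_{B_r(y)}<\infty)\le 1+C\sum_{y\ne x}(r/\|x-y\|)^{d-2}\le 1+Cc_0,
\end{equation*}
so that $\nu/2$ is admissible on $B_r(U)$ and $\capa(B_r(U))\ge\tfrac{1}{2}\nu(\Z^d)\asymp r^{d-2}|U|$.

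\textbf{Reduction and probabilistic sampling.} First reduce to the case of $2r$-separated $\C$ by replacing $\C$ with a maximal $2r$-separated subset $\tilde\C\subseteq\C$. Then $B_r(\C)\subseteq B_{3r}(\tilde\C)$, and since for every $z\in B_{3r}(\tilde\C)$ a one-ball estimate yields $Ge_{B_r(\tilde\C)}(z)=\bP_z(H_{B_r(\tilde\C)}<\infty)\ge c_1>0$, the dual characterization of capacity (any $\mu\ge 0$ with $G\mu\ge 1$ on $A$ satisfies $\mu(\Z^d)\ge\capa(A)$) gives $\capa(B_{3r}(\tilde\C))\le\capa(B_r(\tilde\C))/c_1$, whence $\capa(B_r(\C))\asymp\capa(B_r(\tilde\C))$. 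It is therefore enough to prove the theorem for $\tilde\C$. Let $\mu=e_{B_r(\tilde\C)}$ and $m_x=\mu(B_r(x))$; by disjointness of the balls,
\begin{equation*}
\sum_{x\in\tilde\C}m_x=\capa(B_r(\tilde\C)),\qquad m_x\le\capa(B_r(x))\le Cr^{d-2}.
\end{equation*}
Fix a large absolute constant $K$ and form a random $U_0\subseteq\tilde\C$ by keeping each $x$ independently with probability $p_x:=m_x/(Kr^{d-2})\in[0,1]$; then $\bE[|U_0|]=\capa(B_r(\tilde\C))/(Kr^{d-2})$.

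\textbf{Key estimate and conclusion.} The heart of the argument is the bound
\begin{equation*}
\sum_{y\in\tilde\C\setminus\{x\}}p_y\Big(\frac{r}{\|x-y\|}\Big)^{d-2}\;\lesssim\;\frac{1}{K}\sum_{y\ne x}m_yG(x,y)\;\lesssim\;\frac{G\mu(x)}{K}\;\le\;\frac{1}{K},\qquad x\in\tilde\C,
\end{equation*}
where the first $\lesssim$ uses $(r/\|x-y\|)^{d-2}\asymp r^{d-2}G(x,y)$ for $\|x-y\|\ge 2r$, and the second uses that $G(x,\cdot)$ is essentially constant on each $B_r(y)$ at distance $\ge 2r$ from $x$ to rewrite $m_yG(x,y)\asymp\sum_{z\in B_r(y)}\mu(z)G(x,z)$ before summing over $y$ (which recovers $G\mu(x)\le 1$). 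Taking $K\gg 1/c_0$ and applying Markov's inequality, the expected number of $x\in U_0$ whose in-$U_0$ interaction exceeds $c_0$ is at most $\tfrac{1}{2}\bE[|U_0|]$. Picking a realization beating its average and deleting the ``bad'' points yields $U\subseteq\tilde\C\subseteq\C$ of size $|U|\ge\alpha\capa(B_r(\tilde\C))/r^{d-2}\ge\alpha'\capa(B_r(\C))/r^{d-2}$ and satisfying the separation condition; this is (ii), and (i) follows from the first paragraph.

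The delicate step is the displayed chain of comparisons, which is only legitimate thanks to both the $2r$-separation reduction (ruling out mass double-counting in $\sum m_y$) and the scale-$r$ regularity of the Green's function; without the reduction, overlapping balls in $\C$ would inflate $\sum m_y$ beyond $\capa(B_r(\C))$ and the Markov step would give no useful bound.
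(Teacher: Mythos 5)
Your argument is correct, and it shares the paper's high-level strategy --- the probabilistic method, with a random subset of the centers sampled with probabilities of order (per-ball equilibrium mass)$/r^{d-2}$, and a last-passage identity ($G\mu\le 1$) driving the key expectation bound --- but the execution differs in ways worth recording. The paper forms $\cU=\{x\in\C: H^r_{\Lambda_r}(\gamma_x)=\infty\}$ from auxiliary i.i.d.\ walks and needs Lemma~\ref{lem-1} to convert the escape probability into the per-ball equilibrium mass; you sample each $x$ by an independent Bernoulli trial with parameter $p_x=m_x/(Kr^{d-2})$, which avoids that lemma entirely (the extra factor $1/K$ only affects constants). More substantially, the paper controls the \emph{total} quadratic form $\sum_{x\ne x'\in\cU}\sum_{y\in\partial B_r(x)}\sum_{y'\in\partial B_r(x')}G(y-y')$ in expectation and then runs a joint Chebyshev-plus-Markov argument to extract a realization where $|B_r(\cU)|$ concentrates and the form is simultaneously bounded, which also forces a separate treatment of the degenerate case $\capa(B_r(\C))\lesssim r^{d-2}$. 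Your deletion scheme is a pure first-moment argument --- bound the expected number of ``bad'' centers, delete them --- which needs no Chebyshev, no degenerate-case detour, and delivers the genuinely stronger \emph{pointwise} separation $\sum_{y\in U\setminus\{x\}}(r/\|x-y\|)^{d-2}\le c_0$. Correspondingly, for the capacity lower bound, the paper feeds the uniform probability measure on $\partial B_r(\cU)$ into the variational formula~\eqref{def-1}, whereas you test with $\nu=\sum_{x\in U}e_{B_r(x)}$ and invoke the duality ``$\mu\ge 0$ supported on $A$ with $G\mu\le 1$ on $A$ implies $\mu(\Z^d)\le\capa(A)$''; both are routine consequences of~\eqref{def-1}, but the latter is not recorded in Section~\ref{Sec.cap}, so you should derive or cite it. Finally, make explicit the order in which the constants are fixed: first take $c_0$ small enough that $1+Cc_0\le 2$ in the potential bound, and only then take $K\gtrsim C/c_0$ in the Markov step.
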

We now present two applications of this result, Theorems \ref{theo-AS15} and \ref{prop-folding} below. 
The former deals informally with the event of {\it covering uniformly} a fraction $\rho$ of a set, and bounds  
the probability of such event   
by exponential minus $\rho$ times the capacity of the set, 
under some optimal assumptions on $\rho$ and the scale 
at which we measure density occupation, 
thus improving upon Proposition 1.7 from~\cite{AS15}.
The latter, Theorem~\ref{prop-folding}, deals with the shape of the 
folding region for a walk conditioned on squeezing part of its range, and shows that this region is typically ball-like in
the sense that its capacity is of smallest possible order, 
that is with capacity of order its volume to the power $1-2/d$, 
as it is for balls.
This has some natural applications in the context of moderate deviations 
for the volume or the capacity of 
the range of the walk, as shown in \cite{AS15,AS19,AS20}.  

Let us mention that Theorem~\ref{prop-main} has found application 
in the context of Random Interlacements \cite{S20}.

To be more precise now, for $\Lambda\subset \Z^d$ made of
disjoint balls of radius $r$, consider the event
obtained by asking the random walk to spend
a time $\rho\cdot r^d$ in each ball making $\Lambda$, for some $\rho>0$. 
We have shown in~\cite{AS15} how to relate
the probability of such covering event with the capacity of $\Lambda$.
We let $\{S_n\}_{n\in \N}$ denote the simple random walk, $\bP$ its law when starting from the origin, and for
$n\in \N\cup\{\infty\}$, and $z\in \Z^d$, its local time is 
\begin{equation}\label{def-local}
\ell_n(z) := \sum_{k=0}^n\1\{S_k=z\} \quad \text{and for}
\ \Lambda\subset \Z^d, \ \ell_n(\Lambda):= \sum_{z\in \Lambda} \ell_n(z).
\end{equation}

\begin{theorem}\label{theo-AS15}
Assume $d\ge 3$.
There exist positive constants $A$ and $\kappa$, such that for any $r\ge 1$ and $\rho>0$ satisfying 
\begin{equation}\label{best-as15-hyp}
\rho r^{d-2}>A,
\end{equation}
one has for any finite $\C\subset \Z^d$
\begin{equation}\label{best-as15}
\bP\big(\ell_\infty(B_r(x))> \rho r^d\quad \forall x\in \C\big)\le
 \exp\big(-\kappa \cdot \rho\cdot \capa(B_r(\C) )\big).
\end{equation}
\end{theorem}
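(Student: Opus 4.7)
The approach has two main stages: first apply Theorem~\ref{prop-main} to reduce to a ``well-spread'' subset of $\C$, then control the probability via an excursion-counting argument combined with the capacity lower bound.

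\emph{Step 1 (Reduction via Theorem~\ref{prop-main}).} Extract $U \subseteq \C$ with
\[
\capa(B_r(U)) \geq \alpha\, r^{d-2}|U|, \qquad r^{d-2}|U| \geq \alpha\,\capa(B_r(\C)).
\]
The covering event for $\C$ is contained in the covering event for $U$, so it suffices to prove
\[
\bP\bigl(\ell_\infty(B_r(x)) > \rho r^d\ \forall x \in U\bigr) \leq \exp\bigl(-\kappa'\, \rho\, r^{d-2} |U|\bigr),
\]
from which the stated bound follows with $\kappa = \alpha \kappa'$. The virtue of $U$ is that $\capa(B_r(U))$ is now comparable to $r^{d-2}|U|$, which is (up to constants) the largest value a union of $|U|$ balls of radius $r$ can have, by subadditivity of capacity; informally, the balls in $U$ are ``capacity-wise disjoint''.

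\emph{Step 2 (Large local time forces many excursions).} For each $x \in U$, let $N_x$ count the entries of the walk into $B_r(x)$ separated by exits from $B_{2r}(x)$. Each such excursion contributes to $\ell_\infty(B_r(x))$ a time with exponential tails at scale $r^2$, uniformly in the entry point — by a single-ball Khasminskii-type estimate using $|B_r(x)|/\capa(B_r(x)) \asymp r^2$. A Cram\'er/Chernoff bound then yields
\[
\bP\bigl(\ell_\infty(B_r(x)) > \rho r^d,\ N_x \leq c_0\, \rho\, r^{d-2}\bigr) \leq \exp\bigl(-c_1\, \rho\, r^{d-2}\bigr),
\]
and it is here that hypothesis \eqref{best-as15-hyp} enters, ensuring the genuine large-deviations regime. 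It remains to bound $\bP\bigl(N_x \geq c_0\, \rho\, r^{d-2}\ \forall x \in U\bigr)$.

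\emph{Step 3 (Combining per-ball bounds).} For a single ball, each excursion has a uniformly positive escape probability — from $\partial B_{2r}(x)$ the walk avoids $B_r(x)$ with probability at least $1-(1/2)^{d-2}$, by standard hitting estimates for a ball from a concentric sphere — so individually $\bP(N_x \geq c_0\, \rho\, r^{d-2}) \leq \exp(-c\, \rho\, r^{d-2})$. These per-ball bounds must then be combined across $U$. My plan is to use the strong Markov property at successive last-visit times of the balls $B_r(x)$, $x \in U$: after the walk's last visit to $B_r(x_i)$, its future is independent of the past, and the capacity lower bound of Theorem~\ref{prop-main} guarantees a positive conditional probability to escape $B_r(U)$ for good. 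Iterating this decomposition along the order in which the balls are last visited should yield the target product bound $\exp(-c\, \rho\, r^{d-2}|U|)$.

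\emph{Main obstacle.} The principal difficulty is Step 3: the events $\{N_x \geq c_0\, \rho\, r^{d-2}\}$ for different $x \in U$ are positively correlated (many returns to one ball often entail returns to others), so no naive union/product bound gives the desired rate. A careful Markov-chain decoupling is needed, and the capacity-wise disjointness from Theorem~\ref{prop-main} is essential: without it, the walk could be effectively trapped in $B_r(U)$ and visit each ball many times as a typical rather than a rare event. Moreover, one has to handle the combinatorial cost of summing over the orders in which the balls can be last visited (naively $|U|!$), which must be absorbed either by a finer decomposition or by exploiting that most orderings carry a much smaller weight. The hypothesis $\rho r^{d-2} > A$ is used both for the Cram\'er estimate in Step 2 and to absorb the multiplicative constants accumulating over the $|U|$ steps of the decoupling.
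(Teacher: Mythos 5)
Your Step~1 is exactly the paper's key move, and your closing remark that the hypothesis $\rho r^{d-2}>A$ must absorb per-ball constants is the right instinct. But Step~3 has a genuine gap which you identify but do not resolve: the last-visit-time decomposition is non-adapted, forces a sum over the $|U|!$ possible last-visit orderings, and you offer no mechanism to beat that cost. The paper's route sidesteps this entirely. It first establishes the weaker-looking estimate (this is essentially Proposition~1.7 of \cite{AS15}, slightly improved)
\[
\bP\bigl(\ell_\infty(B_r(x)) > \rho r^d\quad \forall x\in\C\bigr) \;\le\; \exp\bigl(c|\C| - \kappa\,\rho\,\capa(B_r(\C))\bigr),
\]
where the prefactor $e^{c|\C|}$ is perfectly acceptable. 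This comes from a \emph{chronological} excursion decomposition — condition at the successive times the walk re-enters $B_r(\C)$, in the order realized by the trajectory, exactly as in the induction proving Theorem~\ref{theo.1} — so there is no factorial: one pays a bounded cost per ball for entry-point combinatorics and nothing more. Theorem~\ref{prop-main} then enters not just to make the balls ``capacity-wise disjoint'' in a vague sense, but precisely to kill this prefactor: restricting to the subset $U$, one has $\capa(B_r(U))\ge \alpha r^{d-2}|U|$, so the exponent above is at most $|U|\bigl(c - \kappa\alpha\,\rho r^{d-2}\bigr)$, which for $\rho r^{d-2}>A$ with $A$ large is $\le -\kappa'\rho r^{d-2}|U| \le -\kappa'\alpha\,\rho\,\capa(B_r(\C))$. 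In short, do not attempt the sharp per-ball rate directly as in your Steps~2--3; prove the easy bound with the $e^{c|\C|}$ prefactor by a chronological induction, and let Theorem~\ref{prop-main} together with $\rho r^{d-2}>A$ remove the prefactor.
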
 
The condition \reff{best-as15-hyp} improves upon
our previous condition $\rho r^{d-2}>|\C|^{2/d}$, from \cite{AS15}, and is optimal since 
typically a walk spends a time of order $r^2$ 
in a ball of radius $r$, conditionally on visiting it. 
In the case $r=1$ (when $B_r(x)=\{x\}$ for all $x$), we obtain in fact a stronger and more general result. Indeed, first the result holds true for any $\rho>0$ and we 
show that essentially in this case we can take the  constant $\kappa$ equal to one in~\eqref{best-as15}. Furthermore, we can also deal
with non-uniform covering and general transient walks. We refer to Theorem~\ref{theo.1} in Section~\ref{sec-AS15}  for a precise statement. 

\begin{remark}\emph{We note that 
Sznitman obtained results with a similar flavor as Theorem \ref{theo-AS15}
in the context of the Gaussian Free Field (GFF) 
and in the model of Random Interlacements,  respectively in  
\cite[Corollaty 4.4]{S15} and  \cite[Theorem 4.2]{S17} (see also \cite{LS15} for related results). 
}
\end{remark}

%When viewing the walk in time $n$, as a polymer with $n$ monomers,
%one basic example of folding is captured by 
Our second application deals with finite times. 
For $r\ge 1$, $\rho>0$, $n\ge 1$, and $\C\subset \Z^d$ finite, we consider
\begin{equation}\label{def-folding}
\cF_n(r,\rho,\C):=\{\forall x\in \C,\quad \ell_n(B_r(x))>\rho r^d\}.
\end{equation}
In many folding problems, one central issue is
to characterize the size and the shape of the folding region $\C$, which might be random.
More precisely, one may consider folding events of the form
$\cup_{\C}\cF_n(r,\rho,\C)$, where the union is over all $\C\subseteq [-n,n]^d$, 
with only a lower bound on their volume, say $|\C|\ge L$. Then Theorem \ref{theo-AS15} and a naive union bound gives  
$$\bP(\cup_{\C}\cF_n(r,\rho,\C))\le (2n)^{d\cdot L}\cdot
\exp(-\kappa \rho\cdot a\cdot r^{d-2} L^{1-2/d}),$$ 
using also \reff{bounds-capa}, which is useful only when 
\begin{equation}\label{old-cond}
\rho \cdot r^{d-2} \ge CL^{2/d}\cdot \log(n).
\end{equation}
Now Theorem~\ref{prop-main} allows to go beyond this condition \reff{old-cond}, and gives 
$$\bP(\cup_{\C}\cF(r,\rho,\C))\le 
\exp(-\kappa \rho\cdot a \cdot r^{d-2} L^{1-2/d}),$$
under the weaker assumption: 
$$\rho \cdot r^{d-2} \ge C \log(n).$$  
The latter is of crucial importance in \cite{AS20}, and can also be used to characterize 
the shape of a localization region for a random walk, 
which we now describe in details. First, we introduce more notation.
To obtain a neat partition of $\Z^d$ we switch to cubes, rather than
balls. Define for $r\ge 1$, and $x\in\Z^d$, 
$$
Q_r(x):=[x-r/2,x+r/2)^d\cap \Z^d. 
$$
Define further for $\rho>0$ and $n\ge 1$, 
\begin{equation}\label{def-C}
\C_n(r,\rho):= \{x\in r\Z^d\, :\, \ell_n(Q_r(x))\ge \rho r^d\},
\quad \text{and}\quad
\V_n(r,\rho):=\bigcup_{x\in \C_n(r,\rho)} Q_r(x).
\end{equation}
We can now state our third result.

\begin{theorem}\label{prop-folding}
Assume $d\ge 3$. 
There are positive constants $\underline \kappa$, $\overline \kappa$, and $C$, such that for any $n$, $r$ and $L$ positive integers and $\rho>0$,  satisfying  
\begin{equation}\label{cond-folding}
\rho r^{d-2}\ge C\cdot \log(n), \quad \text{and}\quad n\ge C\rho r^dL,
\end{equation}
one has 
\begin{equation}\label{ineq-folding}
\exp\big(-\underline \kappa\cdot 
\rho \cdot r^{d-2}\cdot L^{1-2/d}\big) \le \bP\big( |\C_n(r,\rho)|> L\big)\le \exp\big(-\overline \kappa\cdot 
\rho \cdot r^{d-2}\cdot L^{1-2/d}\big).
\end{equation}
In addition there exists $A>0$, such that 
\begin{equation}\label{shape-folding}
\lim_{n\to\infty} \inf_{(r,\rho,L)} 
\bP\big(\capa(\V_n(r,\rho))\le A\cdot |\V_n(r,\rho)|^{1-2/d}\mid |\C_n(r,\rho)|> L\big)=1, 
\end{equation}
where the infimum is taken over all triples $(r,\rho,L)$ satisfying \eqref{cond-folding}. 
\end{theorem}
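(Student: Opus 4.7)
The plan is to prove the three assertions separately, with Theorem~\ref{prop-main} playing the decisive role for the last two.

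\textbf{Lower bound in~\eqref{ineq-folding}.} Fix a radius $R = c_0 r L^{1/d}$ with $c_0$ large enough that $B_R(0)$ contains at least $L$ disjoint cubes $Q_r(x)$, $x \in r\Z^d$, and set $T = K\rho r^d L$ for a constant $K$; the hypothesis $n \ge C\rho r^d L$ ensures $T \le n$. A standard confinement estimate based on the principal Dirichlet eigenvalue of $B_R$ (which is of order $R^{-2}$) gives
\[
\bP(S_1,\ldots,S_T \in B_R(0)) \ge \exp(-cT/R^2) = \exp\bigl(-c' K\rho r^{d-2} L^{1-2/d}\bigr).
\]
Conditional on this confinement and on $T \gg R^2$, the walk's occupation measure on $B_R$ is nearly uniform, so for $K$ large enough the local time in each $Q_r(x) \subseteq B_{R/2}(0)$ exceeds $\rho r^d$ with probability bounded below, producing $|\C_n(r,\rho)| \ge L$.

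\textbf{Upper bound in~\eqref{ineq-folding}.} Set $r' = \lceil r\sqrt{d}\rceil$, so that $Q_r(x) \subseteq B_{r'}(x)$. On $\{|\C_n(r,\rho)|>L\}$ the set $\C := \C_n(r,\rho)$ satisfies $\ell_n(B_{r'}(x)) \ge \rho r^d$ for every $x \in \C$, i.e.\ $\cF_n(r',\rho',\C)$ holds with $\rho'(r')^d = \rho r^d$ (so $\rho'$ is of order $\rho$). Applying Theorem~\ref{prop-main} to $\C$ at scale $r'$ yields $U \subseteq \C$ with $\capa(B_{r'}(U)) \ge \alpha (r')^{d-2}|U|$ and $(r')^{d-2}|U| \ge \alpha\, \capa(B_{r'}(\C))$; the standard isoperimetric bound $\capa(B_{r'}(\C)) \ge c_1(r')^{d-2}|\C|^{1-2/d}$ then forces $|U| \ge c_2 L^{1-2/d}$. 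Since $\cF_n(r',\rho',\C) \subseteq \cF_n(r',\rho',U)$, Theorem~\ref{theo-AS15} gives $\bP(\cF_n(r',\rho',U)) \le \exp(-\kappa \rho r^{d-2}|U|)$. A union bound over $U \subseteq r\Z^d \cap [-2n,2n]^d$ of cardinality $k$, of which there are at most $(5n/r)^{dk}$, yields
\[
\bP\bigl(|\C_n(r,\rho)| > L\bigr) \le \sum_{k \ge c_2 L^{1-2/d}} \exp\bigl(dk\log(5n/r) - \kappa \rho r^{d-2} k\bigr),
\]
and the hypothesis $\rho r^{d-2} \ge C\log n$ with $C$ large renders the sum geometric and at most $\exp(-\overline\kappa \rho r^{d-2} L^{1-2/d})$. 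Here Theorem~\ref{prop-main} is essential: a naive union bound over $L$-subsets of $[-n,n]^d$ would produce the prohibitive combinatorial factor $(n/r)^{dL}$, forcing the suboptimal condition~\eqref{old-cond}; extracting a smaller set $U$ with $|U|$ as low as $L^{1-2/d}$ allows the per-point exponential gain of Theorem~\ref{theo-AS15} to absorb the combinatorics under the optimal logarithmic condition.

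\textbf{Shape statement~\eqref{shape-folding}.} Apply the same extraction on the joint event $\{|\C_n(r,\rho)| > L\} \cap \{\capa(B_{r'}(\C_n(r,\rho))) > M\}$ with $M := A r^{d-2} L^{1-2/d}$: the candidate $U$ now satisfies $|U| \ge \alpha M/(r')^{d-2} \ge c_3 A L^{1-2/d}$, and the same summation produces the bound $\exp(-c_4 \rho A r^{d-2} L^{1-2/d})$ on this joint probability. Dividing by the lower bound in~\eqref{ineq-folding} and choosing $A$ so that $c_4 A > \underline\kappa$ leaves a conditional probability at most $\exp\bigl(-(c_4 A - \underline\kappa) C \log n\bigr)$, which tends to zero as $n\to\infty$ uniformly over $(r,\rho,L)$ satisfying~\eqref{cond-folding}. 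Finally, $\V_n(r,\rho) \subseteq B_{r'}(\C_n(r,\rho))$ gives $\capa(\V_n(r,\rho)) \le \capa(B_{r'}(\C_n(r,\rho)))$, while $|\V_n(r,\rho)| = |\C_n(r,\rho)| r^d \ge L r^d$ gives $|\V_n(r,\rho)|^{1-2/d} \ge L^{1-2/d} r^{d-2}$; hence, after possibly enlarging $A$ by a constant factor, $\{\capa(\V_n(r,\rho)) \le M\}$ is contained in $\{\capa(\V_n(r,\rho)) \le A|\V_n(r,\rho)|^{1-2/d}\}$, which establishes~\eqref{shape-folding}. The main obstacle throughout is the combinatorial blow-up in the union bound, and Theorem~\ref{prop-main} is the precise tool that resolves it.
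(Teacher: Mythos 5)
Your upper bound and shape arguments follow the paper's route almost exactly: apply Theorem~\ref{prop-main} (after passing from cubes $Q_r$ to balls $B_{r'}$) to extract a small witness set $U$ of size at least of order $L^{1-2/d}$ with $\capa(B_{r'}(U))\gtrsim (r')^{d-2}|U|$, bound the probability that $U$ is uniformly covered via Theorem~\ref{theo-AS15}, and union over $U$, letting the hypothesis $\rho r^{d-2}\ge C\log n$ swallow the combinatorial factor; the paper's Subsection~\ref{subsec.upper} treats the upper bound and shape estimate jointly in one display in exactly this way, and your comparison between $\capa(B_{r'}(\C_n))$ and $\capa(\V_n)$, and between $r^{d-2}L^{1-2/d}$ and $|\V_n|^{1-2/d}$, is the same algebra as theirs. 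So these two parts are essentially a correct reconstruction of the paper's argument.

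The lower bound is where your proposal has a genuine gap. You propose confining the walk to $B_R$, $R\sim rL^{1/d}$, for time $T\sim\rho r^d L$, with cost $\exp(-cT/R^2)=\exp(-c'\rho r^{d-2}L^{1-2/d})$, and then you assert that conditionally the occupation measure is ``nearly uniform,'' so each sub-box of $B_{R/2}$ exceeds the target local time $\rho r^d$ ``with probability bounded below.'' That last step is not enough, for two reasons. First, to conclude $|\C_n(r,\rho)|\ge L$ you need on the order of $L$ boxes to \emph{simultaneously} exceed $\rho r^d$, not merely that each individual box does so with some fixed probability; a per-box guarantee $p$ bounded away from $0$ only yields $\sim pL$ boxes on average and gives no control on the fluctuations. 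Second, turning ``nearly uniform occupation under confinement'' into a quantitative per-box statement with probability close to $1$ (not just bounded below) itself requires a variance estimate. The paper's Proposition~\ref{prop-LB} does precisely this work: it replaces the raw confinement event by the event that the walk performs $N\sim\rho R^{d-2}$ excursions between $\partial Q_{2R}$ and $\partial Q_{4R}$ (the excursion count plays the role of your $T/R^2$ and gives the same exponential cost via Harnack's inequality), and then, conditionally on the excursion endpoints, it runs a second-moment argument within a single box to show each box is covered to density $\rho$ with probability $\ge 3/4$; a Markov bound then forces half the boxes to be simultaneously covered with probability $\ge 1/2$. The excursion framework is what makes the conditional second moment tractable; conditioning directly on the confinement event (as you do) introduces dependencies that would make your ``nearly uniform'' assertion hard to justify. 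Your plan is the right shape, but without the variance control and the mechanism to pass from per-box to many-boxes-at-once, the lower bound does not go through.

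Two minor remarks. The paper's Proposition~\ref{prop-LB} actually proves the (harder) covering version $\widetilde\C_n(r,\rho)$ and notes the local-time version $\C_n(r,\rho)$ is easier, so your direct treatment of $\C_n$ is appropriate. Also note that you must check the hypothesis $\rho'(r')^{d-2}>A$ of Theorem~\ref{theo-AS15} before invoking it; this is automatic from $\rho r^{d-2}\ge C\log n$ once $n$ is large, but should be stated.
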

Let us stress that condition~\eqref{cond-folding} is optimal in the following sense. 
Concerning the first part, 
just recall that in 
time $n$, the walk typically fills balls with an occupation density of order $r^{2-d} \log n$, and  
for the second part, which is only needed for the lower bound in \eqref{ineq-folding}, 
note that one needs at least $n\ge \rho r^dL$, for the set $\C_n(r,\rho)$ to be non-empty.  
Let us also mention here that 
we obtain a similar result as Theorem \ref{prop-folding},
where instead of recording the time spent
in small cubes, we count the number of visited sites, 
see Proposition \ref{prop-LB} for details.

\begin{remark}\label{rem-level}
\emph{Note that the result is interesting on its own right even for $r=1$, 
in which case it concerns the so-called {\it level-sets} 
of the local times, that is the sets of the form 
\begin{equation*}
\cL_n(\rho):= \{z\in \Z^d\, :\, \ell_n(z)>\rho\}.
\end{equation*}
Specializing Theorem \ref{prop-folding} to these sets gives
that for $\rho\ge C\cdot \log(n)$ and $ n\ge C\rho\cdot L$, 
\begin{equation*}
\exp(-\underline \kappa \cdot\rho \cdot L^{1-2/d})\le 
\bP(|\cL_n(\rho) |> L )\le \exp(-\overline \kappa \cdot 
\rho \cdot L^{1-2/d}).
\end{equation*}
Furthermore, asymptotically as $n$ goes to infinity, 
conditionally on being non-empty, the shape of $\cL_n(\rho)$ is ball-like
in the following sense. There is $A>0$, such that for $\rho_n,L_n$
satisfying $\rho_n\ge C\cdot \log(n)$ and $ n\ge C\rho_n\cdot L_n$
\begin{equation*}
\lim_{n\to\infty} \bP\big(\capa(\cL_n(\rho_n))\le 
A\cdot |\cL_n(\rho_n)|^{1-2/d}\mid |\cL_n(\rho_n)|> L_n\big)=1.
\end{equation*}
}
\end{remark}

\begin{remark}\label{rem-simple}
\emph{
For simplicity, we focus here on the case of simple random walk, 
but our results would likely adapt to more general setting. 
}
\end{remark}

\paragraph{Historical Account.} Let us put our results into perspective.
Capacity appears as a central object in many remarkable studies, and  
we would like to highlight some of them. In the thirties, Wiener introduces 
his celebrated test, where the electrostatic capacity plays the key role, 
and is adapted to random walk context by It\^o and McKean much 
later~\cite{IK}. In the forties, Kakutani \cite{K44} discovers that
a compact set of $\R^d$, is hit by Brownian motion with
positive probability, if and only if it has positive electrostatic
capacity. 
Much later, Kesten \cite{Kes90} bounds the growth rate
of diffusion limited aggregation (DLA), a celebrated model 
of discrete random growth on $\Z^d$ where
sites in the boundary of the cluster are chosen 
according to the harmonic measure (of the boundary of the cluster). 
For doing so Kesten
introduces a martingale whose compensator is the sum of inverses 
of capacities of the growing cluster. This in itself is 
inspiring: understanding the growth of the
capacity of the cluster plays a key role in understanding 
the reinforcement phenomenon behind the ramified tree-like
shape of DLA (see also \cite{LT19} for a related model). Finally, ten
years ago, Sznitman \cite{S10} introduced a model called random
interlacements which is a homogeneous Poisson point process on $\Z^d$
such that the number of trajectories 
(the points of the process) hitting a given
compact set $K$ is a Poisson random variable with mean  $u\cdot \capa(K)$,
and whose hitting sites distribution on $K$ is according to the 
harmonic measure of $K$.
The model of random interlacements proves (or is conjectured) to be adapted to the study
of many phenomena where a random walk realizes atypically high densities:
(i) either by reducing its range, and in a certain 
regime this is the {\it Swiss Cheese} problem (see \cite{BBH}),
(ii) or by disconnecting the ball $B_n(0)$ from
from the complement of $B_{2n}(0)$, and many more sophisticated events, 
see in particular \cite{S17, NS20, S20}.

Concerning the deviations for local times, a rich literature exists on 
Large Deviation for the field of renormalized local times, 
initiated by Donsker and Varadhan~\cite{DV}, 
or for self-intersection local times, see \cite{Chen} 
and references therein.  
However, it seems that not much is known concerning 
the deviations of local times of a random walk on a fixed finite set 
(except of course when this set is made of only one point).

The paper is organized as follows. In Section~\ref{Sec.cap} we recall some basic facts on the capacity. 
Section~\ref{sec-tech} contains our main technical
novelty: the proof of Theorem \ref{prop-main}. 
In Section \ref{sec-AS15}, we prove Theorem \ref{theo-AS15}, and
introduce a related result Theorem ~\ref{theo.1} 
of a similar flavor, but dealing with the local times of sites.
Finally, in Section \ref{sec-folding} we prove Theorem \ref{prop-folding}. 
The proof is divided into a short upper bound, and a technical lower
bound in Section~\ref{subsec.lower} where we actually state
Proposition~\ref{prop-LB} which deals with
the (slightly more difficult) problem of covering a certain partition of space, rather than with local times.

\section{Preliminaries on capacity}\label{Sec.cap}
We recall here some alternative definitions of the capacity, and refer to \cite{LL} and \cite{S12} for proofs of these standard facts.  
The first alternative and equivalent definition is in terms of hitting time, rather than escape probabilities: 
\begin{equation}\label{def-DLA}
\capa(\Lambda)=\lim_{\|z\|\to\infty} \frac{1}{G(z)} \bP_z\big(
H^+_{\Lambda}<\infty\big), 
\end{equation}
where $G$ is Green's function: 
$$G(z) := \sum_{n\ge 0} \bP(S_n=z).$$
A third equivalent way to define the capacity is given by the 
variational formula
\begin{equation}\label{def-1}
\frac{1}{\capa(\Lambda)}=\inf\big\{ \sum_{x\in \Lambda}\!
\sum_{y\in \Lambda} G(x-y) \mu(x)\mu(y):\ 
\mu \text{ probability  on }\Lambda\big\}.
\end{equation}
The infimum is reached for the
{\it equilibrium measure} $e_\Lambda$, 
defined for $x\in \Lambda$ by $e_\Lambda(x)=
\bP_{x}(H^+_\Lambda=\infty)/\capa(\Lambda)$. 
%When  the discrete Green's function is replaced by the continuous one, 
%corresponding to the inverse Laplacian,
%then \reff{def-1} defines the electrostatic capacity.

As we already recalled, the capacity of a ball $B_r(x)$ 
is of order $r^{d-2}$, and more generally, 
there exists a constant $a>0$, such that for any $\Lambda\subset \Z^d$,
\begin{equation}\label{bound.cap.lower}
\capa(\Lambda) \ge a |\Lambda |^{1-2/d},
\end{equation}
When applied to a union of disjoint balls, this gives
\begin{equation}\label{bounds-capa}
\capa(B_r(\C) )\ge a \cdot r^{d-2} |\mathcal C|^{1-2/d},
\end{equation}
for some (possibly different) constant $a>0$. 
This bound cannot be improved. 
Looking now for an upper bound of the capacity of a union of balls, 
subadditivity of the capacity gives that it is always bounded
(up to constant) by the number of balls times $r^{d-2}$. However, 
one can improve this crude bound using \reff{def-DLA} yielding
\begin{equation}\label{upper.cap.balls}
\capa(B_r(\C) )\le A\cdot r^{d-2}\cdot \capa(\C),
\end{equation}
for some constant $A>0$ (independent of $r$ and $\C$).

We shall also need the following lemma. 
For $r\ge 1$, we denote by $\mathcal X_r$ the set of finite $\C\subset \Z^d$, whose points are all at distance at least $4r$ one from each other. 
\begin{lemma}\label{lem.Xr}
There exists a constant $c>0$, such that for any finite $\C\subset \Z^d$, there exists a subset $\C'\subseteq \C$, with $\C'\in \mathcal X_r$, satisfying
$$\capa(B_r(\C')) \ge c\cdot \capa(B_r(\C)).$$
\end{lemma}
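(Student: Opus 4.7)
\textbf{Plan for Lemma~\ref{lem.Xr}.} I would construct $\C'$ by a greedy maximal-separation procedure: process the points of $\C$ in an arbitrary order, and place a point $x$ into $\C'$ exactly when $x$ lies at distance at least $4r$ from every point already in $\C'$. The resulting $\C'$ belongs to $\mathcal X_r$ by construction, and by maximality every $x\in \C$ lies within distance $4r$ of some element of $\C'$, so $\C\subseteq B_{4r}(\C')$. Applying the triangle inequality then gives $B_r(\C)\subseteq B_{5r}(\C')$, and by monotonicity of capacity
$$\capa(B_r(\C))\;\le\;\capa(B_{5r}(\C')).$$

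The core step is the reverse-type bound
$$\capa(B_{5r}(\C'))\;\le\; K_d\cdot \capa(B_r(\C'))$$
for a constant $K_d$ depending only on $d$. I would prove this via the hitting-time representation \eqref{def-DLA}. Writing $\sigma=H_{B_r(\C')}$ and $\tau=H_{B_{5r}(\C')}$, the inclusion $B_r(\C')\subseteq B_{5r}(\C')$ forces $\sigma\ge \tau$, and the strong Markov property at $\tau$ yields
$$\bP_z(\sigma<\infty)\;\ge\;\bP_z(\tau<\infty)\cdot\inf_{y\in B_{5r}(\C')}\bP_y(\sigma<\infty).$$
Dividing by $G(z)$ and letting $\|z\|\to\infty$ reduces the claim to the uniform lower bound
$$\inf_{y\in B_{5r}(\C')}\bP_y\big(H_{B_r(\C')}<\infty\big)\;\ge\;c_d\;>\;0.$$

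The uniform bound above is the only delicate point: for any such $y$, there is $x\in \C'$ with $\|y-x\|<5r$, and a standard Green's-function computation (using $\capa(B_r(x))\ge c\, r^{d-2}$ together with $G(y-z)\ge c\, r^{2-d}$ uniformly for $z\in B_r(x)$) bounds $\bP_y(H_{B_r(x)}<\infty)$ below by a constant depending only on $d$ -- the classical fact that a transient walk started within distance $O(r)$ of a ball of radius $r$ hits that ball with probability bounded away from zero. Chaining the two displays then gives $\capa(B_r(\C))\le K_d\cdot \capa(B_r(\C'))$, i.e.\ the lemma with $c=1/K_d$. The main obstacle is really just this uniform hitting estimate; the rest is bookkeeping with the triangle inequality and monotonicity of capacity.
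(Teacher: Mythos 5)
Your proof is correct and follows essentially the same route as the paper: greedy extraction of a $4r$-separated subset, then the hitting-time representation of capacity together with the strong Markov property at the hitting time of the fattened set, reduced to a uniform lower bound on the probability of subsequently hitting $B_r(\C')$. (In fact you are slightly more careful than the paper here: with the greedy construction one has $B_r(\C)\subseteq B_{5r}(\C')$ rather than the paper's $B_{4r}(\C')$, though of course this changes nothing in the argument.)
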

\begin{proof}
We define recursively $\mathcal C_n\subseteq \C$, for $1\le n\le |\C|$ as follows. First pick a point $x_1$ in $\C$, and set $\C_1 := \{x_1\}$. 
Then assuming $\C_n$ has been defined for some $n<|\C|$, define $\C_{n+1}$ as the union of $\C_n$ and a point of $\C\setminus (\cup_{x\in \C_n}B_{4r}(x))$, if this set is nonempty. Otherwise, set $\C_{n+1}:= \C_n$. Define $\C'$ as the set one eventually obtains.
Note that by construction $\C'\in \mathcal X_r$. 

We express now the hitting time of $B_r(\C')$ and use that
$B_r(\C)\subset B_{4r}(\C')$ to obtain for any $z\in \Z^d$,
\begin{align*}
\bP_z(H(B_r(\C'))<\infty)=& \bP_z(H(B_{4r}(\C'))<\infty)\times
\bP_z(H(B_r(\C'))<\infty\ \big|\ H(B_{4r}(\C'))<\infty)\\
\ge&
\bP_z(H(B_{r}(\C))<\infty)\times
\bP_z(H(B_r(\C'))<\infty\ \big|\ H(B_{4r}(\C'))<\infty).
\end{align*}
Since after arriving on the boundary of a ball $B_{4r}(x)$, for $x\in \C'$,
the random
walk hits $B_r(x)$ with a positive probability, say $c$ independent of
$r$ and $\C'$, we obtain
\[
\bP_z(H(B_r(\C'))<\infty\ \big|\ H(B_{4r}(\C'))<\infty)> c.
\]
The proof ends as we recall \reff{def-DLA}, normalize
$\bP_z(H(B_r(\C'))<\infty$ by $G(z)$ and take $z$ to infinity.
\end{proof}

\section{Proof of Theorem~\ref{prop-main}}\label{sec-tech}
The proof is an instance of the probabilistic method: 
we define an appropriately chosen random subset of $\C$, 
and show that it satisfies the desired constraints 
with nonzero probability. 

We start with the proof in the case $r=1$, 
which we think is instructive and more transparent. 

\underline{Case $r=1$}. We need to show that in any finite set $\Lambda\subseteq \Z^d$, there exists a subset $U$, whose capacity and cardinality are both of the order of the capacity of $\Lambda$. The proof is an instance of the probabilistic method.
Indeed, we build a random set $\cU$ which satisfies the desired constraints 
with positive probability.

First, choose a family of i.i.d. trajectories
$(\gamma^x,x\in \Z^d)$ with the same law as the walk $S=\{S_n\}_{n\ge 0}$ starting from the origin, and denote their 
joint law by $\bP$. The hitting time of $\Lambda$ by a (random) path $\gamma: \N\to \Z^d$  is denoted by 
$H_\Lambda(\gamma)$, the return time to $\Lambda$ by 
$H_\Lambda^+(\gamma)$, and set $\gamma_x=\gamma^x+x$.
Now, the random set $\cU$ is
\begin{equation*}
\cU:=\{x\in \Lambda:\ H_\Lambda^+(\gamma_x)=\infty\}.
\end{equation*}
Note that the volume of $\cU$ is a sum of independent Bernoulli random variables,
and thus
\begin{equation*}
\bE[|\cU|]=\sum_{x\in \Lambda} \bP\big(H_\Lambda^+(\gamma_x)=\infty\big)=
\capa(\Lambda),\quad\text{and}\quad
\var(|\cU|)\le \capa(\Lambda).
\end{equation*}
Thus $|\cU|$ is concentrated around its mean and by Chebychev's inequality
\begin{equation}\label{ineq-5}
\bP\big( |\cU|< \frac{1}{2} \bE[|\cU|]\big)\le \frac{4}{\capa(\Lambda)}
,\quad\text{and}\quad
\bP\big( |\cU|> 2\bE[|\cU|]\big)\le \frac{1}{\capa(\Lambda)}.
\end{equation}
We can assume $\capa(\Lambda)>16$, (as for sets with bounded capacity one can always choose $\alpha$ small enough) so that
\reff{ineq-5} reads
\begin{equation}\label{ineq-vol}
\bP\big( 2\capa(\Lambda)\ge 
|\cU|\ge \frac{1}{2} \capa(\Lambda)\big)\ge \frac{2}{3}.
\end{equation}
Now, we show that $\capa(\cU)$ is of order its volume. By \reff{def-1},
if we choose for $\mu$ the uniform measure on $\cU$, then
\begin{equation}\label{ineq-uni}
\frac{\capa(\cU)}{|\cU|}\ge \Big( \frac{1}{|\cU|} \sum_{x,y\in \cU}
G(x-y)\Big)^{-1}.
\end{equation}
Let us compute the expression on the right hand side of \reff{ineq-uni}.
\begin{equation*}
\begin{split}
 \sum_{x,y\in \cU}G(x-y) & = \sum_{x\in \Lambda} \sum_{y\in \Lambda}
\1\{H_\Lambda^+(\gamma_x)=\infty,\, H_\Lambda^+(\gamma_y)=\infty\}\cdot  G(x-y)\\
& = G(0)\cdot |\cU|+\sum_{x\in \Lambda}\sum_{y \in \Lambda\bs\{x\}} 
\1\{H_\Lambda^+(\gamma_x)=\infty,\, H_\Lambda^+(\gamma_y)=\infty\}\cdot G(x-y).
\end{split}
\end{equation*}
Note that if $x\not=y$, then $\gamma_x$ and $\gamma_y$ are independent.
Therefore,
\begin{equation*}
\bE\left[ \sum_{x,y\in \cU}G(x-y)\right]\le G(0)\cdot \bE[|\cU|]+
\sum_{x,y\in\Lambda}\bP\big(H_\Lambda^+(\gamma_x)=\infty\big)
G(x-y) \bP\big(H_\Lambda^+(\gamma_y)=\infty\big).
\end{equation*}
By a last passage decomposition (see Proposition 4.6.4 in \cite{LL}), for $x\in \Lambda$,
\begin{equation*}
1= \bP\big(H_\Lambda(\gamma_x)<\infty\big) =
\sum_{y\in\Lambda}G(x-y) \bP\big(H_\Lambda^+(\gamma_y)=\infty\big).
\end{equation*}
Thus,
\begin{equation*}
\bE\left[\sum_{x,y\in \cU}G(x-y)\right]\le \big(G(0)+1\big)\cdot \capa(\Lambda),
\quad\text{and}\quad
\bP\left(\sum_{x,y\in \cU}G(x-y)\le 
4(G(0)+1)\cdot \capa(\Lambda)\right) \ge \frac{3}{4}.
\end{equation*}
Together with \reff{ineq-vol}, we obtain
\begin{equation}\label{ineq-9}
 \bP\left(2\capa(\Lambda)\ge  |\cU|\ge \frac{1}{2}\capa(\Lambda),\ 
\sum_{x,y\in\cU}G(x-y)\le 4\big(G(0)+1\big)\cdot \capa(\Lambda)\right)
\ge \frac{5}{12}.
\end{equation}
By \reff{ineq-uni} and \reff{ineq-9}, we deduce that for some $\alpha>0$, 
\begin{equation}\label{ineq-main}
\bP\left(2\capa(\Lambda)\ge |\cU|\ge  \frac{1}{2}\capa(\Lambda),\ 
\capa(\cU)\ge\alpha\cdot\capa(\Lambda)\right) 
\ge \frac{5}{12}.
\end{equation}
Thus, we conclude that \reff{ineq-cor} holds for a random set $\cU$, when $r=1$.

We now prove the general case  by refining the previous argument.

\underline{General case $r\ge 1$}. 
The proof follows the same steps after we 
choose an appropriate random subset of the set of centers $\C$. First, by Lemma \ref{lem.Xr} one can assume that $\C\in \mathcal X_r$ (i.e. 
that points of $\C$ are all at distance at least $4r$ one from each other). 
For simplicity, for $r>0$, we set $\Lambda_r=B_r(\C)$, and 
$V_r$ to be the complement of $B_{2r}(\C)$.
We need now the hitting time of $\Lambda_r$
after exiting $B_{2r}(\C)$. For a trajectory $\gamma$, define
\begin{equation*}
H_{\Lambda_r}^r(\gamma)=\inf\{k> H_{V_r}(\gamma):\gamma(k)\in \Lambda_r\}.
\end{equation*}
Then choose a family of i.i.d. trajectories
$(\gamma^x,x\in \Z^d)$ with the same law as $S$, denote the
joint law by $\bP$, and set $\gamma_x=\gamma^x+x$.
Our random set reads now 
\begin{equation*}
\cU:=\{x\in \C:\ H_{\Lambda_r}^r(\gamma_x)=\infty\}.
\end{equation*}
Thus, each center $x\in \C$ is kept in $\cU$ if a random walk launched
from $x$ escapes $\Lambda_r$ after exiting $B_{2r}(\C)$. The reason
to force first to exit $B_{2r}(\C)$ stems from the following
lemma taken from \cite{AS15}. 

\begin{lemma}\label{lem-1}
There is $\theta>1$, such that for any $r\ge 1$, $\C\in \XX_r$, and  $x\in \C$,
\begin{equation}\label{cor-3}
\theta \bP\big(H^r_{\Lambda_r}(\gamma_x)=\infty\big)\ge
\frac{1}{r^{d-2}}\sum_{y\in \partial B_r(x)} 
\bP\big(H^+_{\Lambda_r} (y+S)=\infty\big)\ge 
\frac{1}{\theta} \bP\big(H^r_{\Lambda_r}(\gamma_x)=\infty\big).
\end{equation}
\end{lemma}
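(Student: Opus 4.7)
Both inequalities will follow from the strong Markov property together with the Harnack inequality, via the auxiliary function $h(z) := \bP_z(H_{\Lambda_r} = \infty)$. Since $\C \in \XX_r$, all components of $\Lambda_r$ other than $B_r(x)$ lie at Euclidean distance at least $3r$ from $x$, so $h$ is positive and harmonic on the annulus $B_{3r}(x) \setminus B_r(x)$. The discrete Harnack inequality then produces a dimensional constant $C$ such that $h(z_1) \le C\, h(z_2)$ for all $z_1, z_2 \in \partial B_{2r}(x)$; I write $h_\star$ for any representative value of $h$ on that sphere.

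For the left-hand side of \reff{cor-3}, applying the strong Markov property at $\tau := H_{V_r}(\gamma_x)$ --- which, since $B_{2r}(x)$ is an isolated connected component of $B_{2r}(\C)$, is just the exit time of $\gamma_x$ from $B_{2r}(x)$ --- yields
\[
\bP\bigl(H^r_{\Lambda_r}(\gamma_x) = \infty\bigr) = \bE_x\bigl[h(S_\tau)\bigr],
\]
and since $S_\tau$ lies on $\partial B_{2r}(x)$, the right-hand side is comparable to $h_\star$ up to a factor depending only on $d$.

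For the middle sum in \reff{cor-3}, fix $y \in \partial B_r(x)$ and set $T := H_{\partial B_{2r}(x)}$. Using $\Lambda_r \cap B_{2r}(x) = B_r(x)$, the strong Markov property at the stopping time $T \wedge H^+_{B_r(x)}$ gives
\[
\bP\bigl(H^+_{\Lambda_r}(y+S) = \infty\bigr) = \bE_y\bigl[\1\{T < H^+_{B_r(x)}\}\, h(S_T)\bigr],
\]
and one further application of Harnack on $\partial B_{2r}(x)$ shows this is of order $h_\star \cdot u(y)$, where $u(y) := \bP_y\bigl(T < H^+_{B_r(x)}\bigr)$. The lemma therefore reduces to establishing the boundary-layer estimate
\[
\sum_{y \in \partial B_r(x)} u(y) \asymp r^{d-2}.
\]

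This last estimate is where the work lies. The function $u$ is harmonic on the annulus $B_{2r}(x) \setminus B_r(x)$, with values $0$ on $\partial B_r(x)$ and $1$ on $\partial B_{2r}(x)$; its continuous Brownian analogue is the explicit radial profile $(r^{2-d} - \|z-x\|^{2-d})/(r^{2-d} - (2r)^{2-d})$, which decays like $1/r$ at distance $O(1)$ from the inner boundary. Transferring this asymptotic to the random walk via Harnack comparisons in dyadic shells yields $u(y)$ of order $1/r$ uniformly in $y \in \partial B_r(x)$, and summing over the $\sim r^{d-1}$ such sites produces the required $r^{d-2}$. Combined with the two previous displays this gives both inequalities in \reff{cor-3}. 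I expect the main difficulty to be precisely this boundary-layer computation; once it is in hand, everything else is a routine strong-Markov bookkeeping.
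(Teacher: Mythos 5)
The paper does not actually prove this lemma; it is quoted from \cite{AS15}, so there is no internal argument to compare against. Your proposal, however, is a correct and fairly natural proof, and the overall strategy (strong Markov decomposition at the exit of $B_{2r}(x)$, reduction of both sides of \eqref{cor-3} to a common boundary value $h_\star$ of $h(\cdot)=\bP_\cdot(H_{\Lambda_r}=\infty)$ on $\partial B_{2r}(x)$ via Harnack, and a boundary-layer estimate) is exactly what one expects here. A few remarks to tighten it up. First, the separation hypothesis $\C\in\XX_r$ (points at mutual distance $\ge 4r$) is used in two places: it guarantees $\Lambda_r\cap B_{2r}(x)=B_r(x)$, and it guarantees $h$ is harmonic on an annulus $B_{3r}(x)\setminus B_r(x)$ of fixed aspect ratio, which is what makes the Harnack constant uniform in $r$; you invoke both facts, but you should say explicitly why they follow from the $4r$-separation. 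Second, the identification of $H_{V_r}(\gamma_x)$ with the exit time of $B_{2r}(x)$ deserves a word: with $4r$-separation the balls $B_{2r}(x)$, $x\in\C$, are disjoint, so the first entrance into $V_r$ from $x$ does coincide with the exit from $B_{2r}(x)$ provided these balls are not lattice-adjacent; this is a harmless point, but it is the only place where the precise constant $4r$ matters.

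The one step you flag as ``where the work lies,'' namely $\sum_{y\in\partial B_r(x)}u(y)\asymp r^{d-2}$ with $u(y)=\bP_y(T<H^+_{B_r(x)})$, has a shorter and cleaner proof than the dyadic-shell Harnack transfer you sketch, and you do not actually need the pointwise estimate $u(y)\asymp 1/r$. Indeed, trivially $u(y)\ge\bP_y(H^+_{B_r(x)}=\infty)$, and conversely
\[
\bP_y\big(H^+_{B_r(x)}=\infty\big)\ \ge\ u(y)\cdot\inf_{z\in\partial_{\mathrm{out}} B_{2r}(x)}\bP_z\big(H_{B_r(x)}=\infty\big)\ \ge\ c\,u(y),
\]
where the infimum is bounded below by a constant depending only on $d$ (a standard consequence of transience and \eqref{def-DLA}, or of the Green's function asymptotic). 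Hence $u(y)\asymp\bP_y(H^+_{B_r(x)}=\infty)$ uniformly, and summing over $y\in\partial B_r(x)$ (and noting the escape probability vanishes off the inner boundary) gives $\sum_y u(y)\asymp\capa(B_r(x))\asymp r^{d-2}$ directly. This sidesteps the explicit continuum profile and its discrete transfer entirely, and it is exactly the kind of comparison the paper already uses elsewhere in Section~\ref{sec-tech}.
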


Now, note that $|B_r(\cU)|/|B_r|$ is a sum of $|\C|$ independent
Bernoulli random variables, and therefore 
\begin{equation*}
\var(|B_r(\cU)|)\le |B_r|\cdot \bE[|B_r(\cU)|].
\end{equation*}
Furthermore, thanks to Lemma~\ref{lem-1}, there are
positive constants $c_1$ and $c_2$, such that
\begin{equation*}
c_1 r^2\cdot \capa(B_r(\C))\le \bE[|B_r(\cU)|]=|B_r|\cdot\sum_{x\in \C}
\bP\big(H^r_{\Lambda_r}(\gamma_x)=\infty\big)\le
c_2 r^2\cdot \capa(B_r(\C)).
\end{equation*}
It follows that for a positive constant $c_3$, 
\begin{equation*}
\bP\left( \frac{1}{2} \bE[|B_r(\cU)|]\le |B_r(\cU)|\le 2  \bE[|B_r(\cU)|] \right)\ge 1-  c_3
\frac{r^{d-2}}{\capa(B_r(\C))}. 
\end{equation*}
We can assume that $\capa(B_r(\C))\ge 4c_3 r^{d-2}$ (as otherwise we conclude by taking $\alpha<1/(4c_3)$), in which case it follows that 
\begin{equation*}
\bP\left( \frac{1}{2} \bE[|B_r(\cU)|]\le |B_r(\cU)|\le 2  \bE[|B_r(\cU)|] \right)\ge \frac{3}{4}.
\end{equation*}
Thus, with probability larger than 3/4, the random set $\cU$ 
satisfies $(ii)$ of \reff{ineq-cor}. Let us check now $(i)$.
By \reff{def-1}, we obtain a lower bound on $\capa(B_r(\cU))$
as we choose a measure on $B_r(\cU)$. Taking the uniform measure on the {\it boundary of $B_r(\cU)$} gives
\begin{equation}\label{cor-8}
\frac{1}{(|\partial B_r|\cdot |\cU|)^2}\sum_{x,x'\in \cU}\sum_{y\in \partial
B_r(x)}\sum_{y'\in \partial B_r(x')} G(y-y')\ge \frac{1}{\capa(B_r(\cU))}.
\end{equation}
We need to show that the left hand side of \reff{cor-8} is
smaller than $1/(r^{d-2}|\cU|)$. First, we treat the case
$x'=x$. Note that
by Green's function asymptotic \reff{asymp-G}, 
there is $c>0$, such that
\[
\forall y\in \partial B_r(x)\qquad
\sum_{y'\in \partial B_r(x)} G(y-y')\le c\cdot  r.
\]
Thus, as we further
sum over $y\in \partial B_r(x)$, and $x\in \cU$, we obtain
\begin{equation}\label{cor-9}
\sum_{x\in \cU} \sum_{y\in \partial B_r(x)}
\sum_{y'\in \partial B_r(x)} G(y-y')\le c\cdot r\cdot r^{d-1}
\cdot |\cU|.
\end{equation}
Now, to deal with the terms with $x'\not= x$, we take
expectation first, and we bound $G(y-y')$
by $c_4\cdot G(x-x')$ uniformly in $y\in \partial B_r(x)$ and 
$y'\in \partial B_r(x')$. Therefore,
\begin{equation*}
\begin{split}
\bE\left[\sum_{x\not=x'\in \cU} \sum_{y\in \partial
B_r(x)}\sum_{y'\in \partial B_r(x')} G(y-y')\right]& \le  c_4|\partial B_r|^2
\cdot \bE\big[\sum_{x\not=x'\in \cU} G(x-x')\big]\\
& \le  c_4|\partial B_r|^2 \sum_{x\not= x'\in \C}\!\bP\big(H^r_{\Lambda_r}
(\gamma_x)=\infty\big)  G(x-x') 
\bP\big(H^r_{\Lambda_r} (\gamma_{x'})=\infty\big)\\
& \le  c_5 (r^{d-1})^2\bE[|\cU|] \sup_{x\in \C}
 \sum_{x'\not= x} G(x-x')\bP\big(H^r_{\Lambda_r} (\gamma_{x'})=\infty\big).
\end{split}
\end{equation*}
By using \reff{cor-3} of Lemma~\ref{lem-1}, 
and a last passage decomposition we have for a constant $c_6>0$, and any $x\in \C$, 
\begin{equation*}
\begin{split}
1&=\bP\big(H_{\Lambda_r}(\gamma_x)<\infty\big)
\ge \sum_{\substack{x'\in \C \\ x'\neq x}}
\sum_{y\in \partial B_r(x')} G(x-y) 
\bP(H^+_{\Lambda_r}(\gamma_y)=\infty\big)\\
&\ge c_6 r^{d-2} \sum_{\substack{x'\in \C \\ x'\neq x}} G(x-x') 
\bP\big(H^r_{\Lambda_r} (\gamma_{x'})=\infty\big).
\end{split}
\end{equation*}
This implies that for a constant $c_7>0$, 
\begin{equation*}
\bE\left[\sum_{x\not=x'\in \cU} \sum_{y\in \partial
B_r(x)}\sum_{y'\in \partial B_r(x')} G(y-y')\right]\le 
c_7 r^d \cdot \bE[|\cU|].
\end{equation*}
Chebychev's inequality now allows us to conclude as in
the proof of the case $r=1$.

\section{Proof of Theorem~\ref{theo-AS15}}\label{sec-AS15}
We start with a 
variant of Theorem~\ref{theo-AS15} which deals with the event of 
visiting a certain number of times each site of a set, and connect the
probability of such an event with the capacity of the set. 
Let $q:=\bP(H_0^+<\infty)$,
and for a finite set $\Lambda\subset \Z^d$, and $z\in \Lambda$, let 
$q_z = q_{z,\Lambda}: = \bP_z(H_\Lambda^+<\infty)$. 
\begin{theorem}\label{theo.1}
Assume $d\ge 3$, and let $\Lambda$ be a finite subset of $\Z^d$. 
Then, for any set of nonnegative integers $(n_z)_{z\in \Lambda}$, 
\begin{equation}\label{theo.1.1}
\mathbb P(\ell_\infty(z) \ge n_z \quad \forall z\in \Lambda) \le \frac{\prod_{z\in \Lambda} q_z^{n_z}}{\min_{z\in \Lambda}q_z}.
\end{equation}
In particular for all $t\ge 1$, 
\begin{equation}\label{theo.1.2}
\mathbb P(\ell_\infty(z) \ge t \quad \forall z\in \Lambda) 
\le \frac{1}{q} \exp\big(-t\cdot \capa(\Lambda)\big). 
\end{equation}
\end{theorem}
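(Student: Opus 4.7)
The plan is to deduce~\eqref{theo.1.2} from~\eqref{theo.1.1} by elementary manipulations, and to prove~\eqref{theo.1.1} by analyzing the Markov chain obtained by tracing $S$ only at its successive visits to $\Lambda$.

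For the reduction, since $\ell_\infty(z)\in\N$, one has $\{\ell_\infty(z)\ge t\}=\{\ell_\infty(z)\ge \lceil t\rceil\}$; applying~\eqref{theo.1.1} with $n_z=\lceil t\rceil$ and using $q_z\le 1$ to replace $q_z^{\lceil t\rceil}$ by $q_z^t$ gives $\bP(\cdot)\le(\min_z q_z)^{-1}\prod_z q_z^t$. Two basic observations then finish the job: by translation invariance, for every $z\in\Lambda$, $q_z=\bP_z(H_\Lambda^+<\infty)\ge\bP_z(H_z^+<\infty)=q$, so $(\min_z q_z)^{-1}\le 1/q$; and $\log q_z\le -(1-q_z)$ combined with $\sum_z(1-q_z)=\sum_z\bP_z(H_\Lambda^+=\infty)=\capa(\Lambda)$ yields $\prod_z q_z^t\le\exp(-t\,\capa(\Lambda))$.

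Turning to~\eqref{theo.1.1}, set $T_0:=H_\Lambda$ and inductively $T_{k+1}:=\inf\{n>T_k:S_n\in\Lambda\}$. By transience, the number $M+1$ of visits to $\Lambda$ is a.s.\ finite, and the trace $W_k:=S_{T_k}$, defined for $0\le k\le M$, is (by the strong Markov property) a Markov chain on $\Lambda\cup\{\dagger\}$ with initial distribution $\pi_0(w):=\bP(H_\Lambda<\infty,\,S_{H_\Lambda}=w)$, transitions $p(w,w'):=\bP_w(H_\Lambda^+<\infty,\,S_{H_\Lambda^+}=w')$ of row sums $q_w$, and absorbed at $\dagger$ with probability $1-q_w$. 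The event $A:=\{\ell_\infty(z)\ge n_z\ \forall z\}$ is exactly the event that each $z\in\Lambda$ appears at least $n_z$ times among $W_0,\dots,W_M$. I then introduce the \emph{first completion index} $T^*:=\inf\{m:|\{0\le k\le m:W_k=z\}|\ge n_z\ \forall z\in\Lambda\}$; on $A$ one has $T^*\le M<\infty$, and by minimality the state $W_{T^*}=:z^*$ is a site whose $n_{z^*}$-th visit occurs exactly at step $T^*$, so in particular $n_{z^*}\ge 1$.

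The key estimate is a uniform bound on $\bP(T^*=m,W_m=z^*)$. Writing $p(w,w')=q_w\,Q(w,w')$ with $Q$ stochastic, the probability of a trajectory $(w_0,\dots,w_m)$ factors as $\pi_0(w_0)\prod_{k=0}^{m-1}Q(w_k,w_{k+1})\cdot\prod_{k=0}^{m-1}q_{w_k}$. On the event under consideration, the multiset $\{w_0,\dots,w_{m-1}\}$ contains each $z$ at least $n_z-\delta_{z,z^*}$ times (only the last symbol $w_m=z^*$ is excluded), hence $q_z\le 1$ yields the trajectory-independent estimate $\prod_{k=0}^{m-1}q_{w_k}\le \prod_z q_z^{n_z}/q_{z^*}$. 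Pulling this factor out and summing the remaining $\pi_0(w_0)\prod_k Q(w_k,w_{k+1})$ over $(w_0,\dots,w_m)$ and over $m$ gives at most the total mass of an auxiliary Markov chain with sub-probability initial distribution $\pi_0$ and stochastic kernel $Q$, which is $\le\bP(H_\Lambda<\infty)\le 1$. Summing finally over $z^*\in\Lambda$ and using $1/q_{z^*}\le 1/\min_z q_z$ delivers $\bP(A)\le \prod_z q_z^{n_z}/\min_z q_z$. The only delicate point is the definition of $T^*$: it must be minimal so that $W_{T^*}=z^*$ is genuinely a ``completing'' symbol, which is exactly what drops the exponent of $q_{z^*}$ from $n_{z^*}$ to $n_{z^*}-1$ and, after summing over $z^*$, produces the $1/\min q_z$ correction.
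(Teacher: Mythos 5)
Your proof is correct in substance and takes a genuinely different route from the paper's. The paper proves \eqref{theo.1.1} by induction on $N=\sum_z n_z$: it conditions on the first return to $\Lambda$, reduces the count at the starting site by one, applies the induction hypothesis, and needs a small case analysis according to whether $0\in\Lambda$ and whether $n_0\ge 2$, $=1$, or $=0$. You instead pass to the trace chain on $\Lambda$ and give a one-shot path decomposition at the first completion time $T^*$, factoring the trajectory weight into a stochastic $Q$-part times the survival factor $\prod_{k<T^*}q_{W_k}$ and bounding the latter uniformly. Starting the clock at $T_0=H_\Lambda$ lets you dispense with the case analysis. Both arguments ultimately spend the survival probabilities $q_{W_0},\dots,q_{W_{T^*-1}}$ and both produce the $1/\min_z q_z$ correction for the same reason (the completing site is only charged $n_{z^*}-1$ times); the paper's induction is essentially your path decomposition written recursively, step by step. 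Your derivation of \eqref{theo.1.2} from \eqref{theo.1.1} matches the paper's, with the useful observation $q_z\ge q$ made explicit.

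One small bookkeeping slip in your last step: you bound, for each fixed $z^*$, the sum over $m$ of the $Q$-chain masses by $1$, and only afterwards sum over $z^*\in\Lambda$, which as written produces a spurious factor $|\Lambda|$. The repair is immediate and you clearly have the right picture in mind: the events $\{T^*=m,\,W_m=z^*\}$ are pairwise disjoint over both $m$ and $z^*$, so one should first replace $1/q_{z^*}$ by the uniform $1/\min_z q_z$, pull that constant out, and then sum the remaining $Q$-masses jointly over all $(m,z^*)$ to get at most $1$. Equivalently, skip the decomposition over $z^*$ entirely and observe directly that $\prod_{k<T^*}q_{W_k}\le\prod_z q_z^{n_z}/\min_z q_z$ holds on $\{T^*<\infty\}$, so that the whole bound is a constant times the $Q$-chain mass of $\{T^*<\infty\}$, which is at most $1$.
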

Both Theorems~\ref{theo-AS15} and \ref{theo.1} use improvements
of the proof of Proposition 1.7 of \cite{AS15}. We present
a simple self-contained proof of Theorem~\ref{theo.1}, 
and note that it works in fact for any random walk. 

\subsection{Proof of Theorem~\ref{theo.1}} 
The proof proceeds by induction on $N:= \sum_{z\in \Lambda} n_z$. 
Note that it is important in the proof to allow some integers $n_z$ to be equal to $0$. 
If $N=0$ or $N=1$, there is nothing to prove, since the right-hand side of \eqref{theo.1.1} is larger than or equal to $1$ in this case. Assume now that the result is true for 
any sequence $(n_z)_{z\in \Lambda}$, with $\sum_{z\in \Lambda} n_z \le N$, for some $N\ge 1$, and consider another sequence (which we still denote by $(n_z)_{z\in \Lambda}$) satisfying $\sum n_z = N+1$. 
If $0\in \Lambda$, and $n_0\ge 2$, we write (recalling that in the definition of local times, the time $0$ is taken into account), with $\Lambda^*:=\Lambda\setminus \{0\}$, 
\begin{equation}\label{AS15-best}
\begin{split}
\bP(\ell_\infty(z) \ge n_z\ \forall z\in \Lambda) & = \sum_{y\in \Lambda:\, n_y\ge 1}  \bP(H_\Lambda^+<\infty, S(H_\Lambda^+) =y)\cdot 
\bP_y(\ell_\infty(z) \ge n_z\ \forall z\in \Lambda^*,\,  \ell_\infty(0) \ge n_0-1)\\
& \le \sum_{y\in \Lambda:\, n_y\ge 1} \bP(H_\Lambda^+<\infty, S(H_\Lambda^+) =y)\cdot \frac{(\prod_{z\in \Lambda^*} q_z^{n_z} )\cdot q_0^{n_0-1}}{\min_{z\in \Lambda} q_z} \\
& \le \bP(H_\Lambda^+<\infty) \frac{(\prod_{z\in \Lambda^*} q_z^{n_z} ) q_0^{n_0-1}}{\min_{z\in \Lambda} q_z} = \frac{\prod_{z\in \Lambda} q_z^{n_z} }{\min_{z\in \Lambda} q_z}, 
\end{split}
\end{equation}
using the induction hypothesis at the second line. This proves the induction step in the case when $0\in \Lambda$ and $n_0\ge 2$. If $0\in \Lambda$ and $n_0=1$, 
we write similarly 
\begin{align*}
\bP(\ell_\infty(z) \ge n_z\ \forall z\in \Lambda) & = \sum_{y\in \Lambda^*:\, n_y\ge 1}  \bP(H_\Lambda^+<\infty, S(H_\Lambda^+) =y)\cdot 
\bP_y(\ell_\infty(z) \ge n_z\ \forall z\in \Lambda^*)\\
& \le \sum_{y\in \Lambda^*:\, n_y\ge 1} \bP(H_\Lambda^+<\infty, S(H_\Lambda^+) =y)\cdot \frac{\prod_{z\in \Lambda^*} q_z^{n_z} }{\min_{z\in \Lambda} q_z} \\
& \le \bP(H_\Lambda^+<\infty) \frac{\prod_{z\in \Lambda^*} q_z^{n_z} }{\min_{z\in \Lambda} q_z} = \frac{\prod_{z\in \Lambda} q_z^{n_z} }{\min_{z\in \Lambda} q_z}, 
\end{align*}
proving as well the induction hypothesis. Finally, when $0 \notin \Lambda$, or when $0\in \Lambda$ and $n_0=0$, one can simply bound the probability on the left hand side above by the probability to 
hit a point $y\in \Lambda$ with $n_y\ge 1$, and then by  the Markov property we are back to the previous situation. Altogether this concludes the proof of the first assertion \eqref{theo.1.1}.

The second assertion \eqref{theo.1.2} follows immediately from \eqref{theo.1.1}, using that for any $z\in \Lambda$, 
$$q_z = 1- \bP_z(H_\Lambda^+ = \infty) \le \exp(-\bP_z(H_\Lambda^+= \infty)).$$ 

\subsection{Proof of Theorem~\ref{theo-AS15}}
First, using Lemma~\ref{lem.Xr}, one can assume that all points of $\C$ are at distance at least $4r$ one from each other, as stated in \cite{AS15}.
The proof of Proposition 1.7 of \cite{AS15} then shows 
that for some positive constants $\kappa$ and $c$, 
for all $r\ge 1$, $\rho>0$, and $\C\in \mathcal X_r$, 
\begin{equation}\label{old-as15}
\bP\big(\forall x\in \C,\ \ell_\infty(B_r(x))> \rho r^d\big)\le
\exp\big(c|\C|-\kappa\cdot \rho\cdot \capa(B_r(\C)\big).
\end{equation}
Indeed, instead of considering case of equalities in (3.3) of
\cite{AS15}, it is enough to consider an induction step with a number
of excursions larger than or equal to $\{n_z,z\in \C\}$, as in
\reff{AS15-best} of the previous proof (that is replace $=$ with $\ge$
in (3.3) of \cite{AS15}).
Then, Theorem \ref{prop-main} gives the existence of a 
subset $U\subseteq \C$, with $\capa(B_r(U))$ of the same order 
as both $r^{d-2}\cdot |U|$ and $\capa(B_r(\C))$.
This allows to remove the combinatorial factor in \eqref{old-as15}, 
using the hypothesis \eqref{best-as15-hyp}, and this concludes the proof of the theorem.

\section{Application to Folding, and proof of Theorem \ref{prop-folding}}\label{sec-folding}
The proof of Theorem \ref{prop-folding} is divided in two parts. In the first part (see Subsection \ref{subsec.upper} below), we show that for some positive constants $\tilde \kappa$  and $A_0$,  for any $A>A_0$, any $n\ge 1$, and any $(r,\rho,L)$ satisfying \eqref{cond-folding}, 
\begin{equation}\label{bounds-folding}
\bP\left(|\C_n(r,\rho)|>L,\, \capa(\V_n(r,\rho))> A|\V_n(r,\rho)|^{1-2/d}\right)\le \exp(-\tilde \kappa A\cdot  \rho\cdot  r^{d-2} L^{1-2/d}).
\end{equation} 
In the second part (see Subsection \ref{subsec.lower} below) we prove the lower bound in \eqref{ineq-folding}. Note that altogether this gives \eqref{shape-folding} as well, and thus proves Theorem \ref{prop-folding}.

\subsection{The upper bound: proof of \reff{bounds-folding}}\label{subsec.upper}
We introduce the notation $Q_r(U)$ for $\cup_{x\in U} Q_r(x)$,
and we use Theorem~\ref{prop-main}, with the 
condition~\reff{cond-folding} and then \reff{best-as15} as follows.
\begin{equation*}
\begin{split}
\bP\big(|\C_n(r,\rho)|>& L,\ \capa(\V_n(r,\rho))\ge A\cdot |\V_n(r,\rho)|^{1-\frac 2d}\big)\le 
\sum_{k>L}  \bP\big(|\C_n(r,\rho)|=k, \capa(\V_n(r,\rho))
\ge A\cdot r^{d-2}k^{1-2/d}\big)\\
&\le \sum_{L<k\le n} \bP\big(\exists\ \cU\subset [-n,n]^d:\ 
k\ge |\cU|> \alpha A k^{1-2/d},\ \capa(Q_r(\cU))\ge 
\alpha r^{d-2} |\cU|\big)\\
&\le \sum_{L<k\le n} \sum_{\alpha A k^{1-2/d}<i\le k} 
\bP\big(\exists\ \cU\subset [-n,n]^d:\ |\cU|=i,\  
\capa(Q_r(\cU))\ge \alpha r^{d-2} i\big)\\
&\le  \sum_{L<k\le n} \sum_{\alpha A k^{1-2/d}<i\le k}
c (2n)^{d\cdot i}\cdot i!\cdot
\exp\big(-\kappa \alpha \rho r^{d-2}\cdot i \big)\\
&\le\ c\sum_{k>L} 
\exp\big(-\tilde \kappa A\cdot \rho r^{d-2} k^{1-2/d}\big)
\le\ c  \exp\big(-2\tilde \kappa A\cdot \rho r^{d-2} L^{1-2/d}\big). 
\end{split}
\end{equation*}
The combinatorial factor $(2n)^{d\cdot i}\cdot i!$ was swallowed
after using the condition that $r^{d-2}\cdot \rho> C \log(n)$,
and choosing $A$ large enough.

\subsection{Lower bound}\label{subsec.lower}
In this subsection, we establish a result which slightly differs from the lower bound in \eqref{ineq-folding}, and 
deals with covering rather than occupation. For this purpose we introduce for $n\ge 1$, the range
of the walk $\cR_n:=\{S_0,\dots,S_n\}$, and for any  $r\ge 1$ and $\rho\in [0,1]$,  
\[
\widetilde \C_n(r,\rho):= \{z\in r\Z^d:\ |\cR_n\cap Q_r(z)|\ge \rho |Q_r|\}.
\]
Our result is as follows. 
\begin{proposition}\label{prop-LB}
There exist positive constants $c$ and $C$, 
such that for any $n\ge 1$, $r>0$, $\rho\in (0,1/2)$, and $L\ge 1$, satisfying 
$$\rho r^{d-2}\ge 1,\quad \text{and} \quad  n\ge C\rho r^d L,$$
one has 
\begin{equation*}
\bP\big(|\widetilde C_n(r,\rho)|\ge  L\big) 
\ge c\, \exp(-c\, \rho r^{d-2}  L^{1-\frac 2d}).
\end{equation*}
\end{proposition}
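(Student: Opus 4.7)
The strategy will be to fold the walk into a ball $B_R(0)$ of radius $R := C_1 L^{1/d} r$ for a time $T := C_2 \rho r^d L$, where $C_1, C_2$ are absolute constants to be tuned. The hypothesis $n \ge C\rho r^d L$ ensures $T \le n$, while $\rho r^{d-2} \ge 1$ yields $T \gtrsim R^2$, so the walk has time to mix inside $B_R(0)$. A standard spectral estimate (the principal Dirichlet eigenvalue on $B_R(0)$ is of order $R^{-2}$, with eigenfunction bounded below at the origin) will give
\[
\bP\big(S_k \in B_R(0) \text{ for all } k\le T\big) \;\ge\; c\exp(-CT/R^2)\;=\;c\exp(-C'\rho r^{d-2} L^{1-2/d}),
\]
which already matches the target exponent.

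To conclude, I will show that, conditionally on the confinement event above, the event $\{|\widetilde\C_T(r,\rho)|\ge L\}$ holds with probability at least $1/2$. Choose $C_1$ large enough that the set of cube centers $\Lambda := \{z\in r\Z^d\cap B_{R/2}(0) : Q_r(z)\subset B_R(0)\}$ has $|\Lambda|\ge 3L$. The plan is to slice $[0,T]$ into $M \asymp T/R^2 \asymp \rho r^{d-2} L^{1-2/d}$ blocks of length $R^2$ and exploit that, conditional on survival, these blocks behave essentially like i.i.d.\ confined trajectories. For each fixed $z \in \Lambda$ and each block, the conditioned walk enters $Q_r(z)$ with probability $\asymp |\Lambda|^{-1}\asymp L^{-1+2/d}$ (harmonic-measure estimate for a cube well inside a larger ball) and, upon entering, visits $\Omega(r^2)$ new distinct sites of the cube with positive probability. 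Summing over the $M$ blocks yields a conditional expectation of $|\cR_T\cap Q_r(z)|$ of order $M \cdot L^{-1+2/d}\cdot r^2 \asymp C_2\rho r^d$, and a second-moment/Chebychev argument based on near-independence of blocks forces $|\cR_T\cap Q_r(z)|\ge \rho r^d$ with probability at least $2/3$ once $C_2$ is large. A Markov-type averaging over the $\ge 3L$ centers in $\Lambda$ then produces at least $L$ good cubes with probability $\ge 1/2$.

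Combining the two lower bounds and using $\widetilde\C_T(r,\rho)\subseteq \widetilde\C_n(r,\rho)$ finishes the proof. The main obstacle is the conditional coverage step: the confined walk is not literally stationary, and the quantity to control is the \emph{range} inside a cube rather than the local time, so already-visited sites must be discounted. The cleanest route is a concentric-shell excursion decomposition of the confined trajectory, producing nearly i.i.d.\ excursion blocks each of which contributes an independent new-range increment to any fixed target cube; alternatively, one could pass to a reflected version of the walk in $B_R(0)$, which is genuinely stationary, and transfer the bound by comparison.
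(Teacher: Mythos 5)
Your overall strategy matches the paper's: localize the walk in a box of side $\asymp L^{1/d}r$ at a cost $\exp(-c\,\rho r^{d-2}L^{1-2/d})$, show the walk covers at least $L$ of the sub-cubes of side $r$ to density $\rho$ with conditional probability bounded below, and finish by a Markov bound over the $\gtrsim L$ candidate centers. The exponent bookkeeping also agrees with the paper, which sets $R=\lfloor L^{1/d}r\rfloor$, $T=\lfloor C_1\rho R^d\rfloor$ and $N=\lfloor C_2\rho R^{d-2}\rfloor$. However, the route you sketch, conditioning on a full-time confinement event $\{S_k\in B_R(0)\ \forall k\le T\}$ and treating blocks of the conditioned walk as nearly i.i.d., is not what the paper does, and the gap you flag at the end (non-additivity of the range across blocks, non-stationarity of the conditioned walk) is exactly where your sketch stops being a proof.

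The paper avoids the Doob-transform issue entirely: it never conditions on confinement. It introduces $A=\{\NN_R\ge N\}$, the event that the walk makes at least $N\asymp\rho R^{d-2}$ excursions from $\partial Q_{2R}$ to $\partial Q_{4R}$ before leaving $Q_{8R}$, and $B$, the event that the first $N$ such excursions visit at least $\rho|Q_r|$ sites in at least half of the boxes $Q_r(v_i)$. It then lower bounds $\bP(A\cap B)$ directly by summing over the $N$ excursion endpoints, using Harnack's inequality to bound from below, uniformly in the exit point on $\partial Q_{4R}$, the probability of re-entering at a given point of $\partial Q_{2R}$; this yields $\bP(A\cap B)\ge e^{-c_1N}$ with genuine, not approximate, independence between excursions given their endpoints. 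The time horizon enters only through a separate event $C$ ($\ge N$ excursions before time $T$), and $\bP(A\cap C^c)$ is killed by the confinement upper bound $\exp(-cT/R^2)$ once $C_1$ is chosen large relative to $C_2$.

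For the per-cube coverage step, the ``concentric-shell excursion decomposition'' you mention as a fallback is indeed the paper's method, but it needs two refinements your sketch omits. First, the second-moment argument is run conditionally on the $\sigma$-field $\G$ generated by the number $\NN_r$ of sub-excursions hitting $\partial Q_{2r}(v_i)$ and their hitting points, and the conditional variance is controlled by splitting pairs $(y,z)$ of visited sites into those lying on a common sub-excursion (contributing $\lesssim r^2\cdot\bE_{\mathbf x}[|\XX|\mid\G]$) and those on distinct ones (correlation of order $r^{2-d}$). Second, and more seriously, your estimate $\bE[|\cR_T\cap Q_r(z)|]\asymp C_2\rho r^d$ is capped above by $|Q_r|=r^d$, so asking for $C_2$ large already fails once $\rho$ is of constant order; the paper therefore separates a high-density regime, where the trivial cap $|\XX|\le|Q_r|$ together with $\rho<1/2$ suffices, from a low-density regime where the conditional second moment is used. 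A minor slip: you write $|\Lambda|^{-1}\asymp L^{-1+2/d}$, but $|\Lambda|\asymp L$; the correct per-block hitting probability is $(r/R)^{d-2}\asymp L^{-1+2/d}$, which is in fact what your subsequent arithmetic uses.
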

Thus this result is exactly the same as the lower bound in \eqref{ineq-folding}, but for this new set $\widetilde \C_n(r,\rho)$. 
Since the proof for $\C_n(r,\rho)$ is easier, we will only provide the proof for the set $\widetilde \C_n(r,\rho)$.  
Note also that since $\widetilde \C_n(r,\rho) \subseteq \C_n(r,\rho)$, the upper bound in \eqref{ineq-folding}, as well as \eqref{shape-folding}, also hold for the set $\widetilde \C_n(r,\rho)$. 
   
\begin{remark}\emph{The hypothesis $\rho<1/2$ in Proposition \ref{prop-LB} could be replaced 
by $\rho<1-\eta$, for any fixed constant $\eta>0$, 
and the constants $c$ and $C$ would then
depend on $\eta$. However, when $\rho$ gets close to $1$, we fall in another regime, and for instance when $\rho=1$  
an extra $\log r$ factor is needed in the exponential
(and in the time needed to achieve the covering).}
\end{remark}
\begin{proof}
The scenario we choose to produce the desired event is to localize the
walk long enough in a ball so that its occupation density is $\rho$. 
It is convenient to transform
localization into a statement about excursions.
We call {\it excursion} from a set $U$ to a set $V$,
the part of the random walk path starting from a point in $U$ 
up to its hitting time of $V$ which is maximal 
(for the inclusion of paths).
To produce $L$ boxes of side $r$, and filled to a density above $\rho$,
we expect to localize the walk in a ball of radius $R$ for a time $T$ with
\[
R=\lfloor  L^{1/d}r\rfloor,\quad\text{and}\quad
T=\lfloor C_1\rho R^d\rfloor\quad(\text{and $C_1$ large enough}).
\]
Let now $\NN_R$ be the number of excursions from $\partial Q_{2R}$ 
to $\partial Q_{4R}$ before exiting $Q_{8R}$, and consider 
\begin{equation}\label{def-A}
A:=\{\NN_R\ge N \},\quad\text{with}\quad 
N=\lfloor C_2\rho R^{d-2}\rfloor .
\end{equation}
Now, for any $C_2$ (that is any number of excursions $N$), 
one makes the event $A$ typical by choosing
$C_1$ large (that is a large localization time $T$), and assume $n\ge T$.
On event $A$, we define $X=(X_1,\dots,X_N)$ and $Y=(Y_1,\dots,Y_N)$, 
as respectively the starting and ending points of the $N$ 
first excursions from $\partial Q_{2R}$ to $\partial Q_{4R}$.
Then given some fixed ${\bf x}=(x_1,\dots,x_N)\in \partial Q^N_{2R}$
we let $\bP_{{\bf x}}$ be the law of $N$ excursions starting 
from $\{x_i,\ i\le N\}$, up to $\partial Q_{4R}$. 
We still denote by $Y$ the set of ending points 
of the $N$ excursions under $\bP_{{\bf x}}$. 
We let $M$ be the cardinality of the set $r\Z^d\cap Q_{R-r}$, 
and number its elements in some arbitrary order, say $v_1,\dots,v_M$.
We define
\begin{equation*}
B:= \left\{
\begin{array}{cc}
\text{The $N$ (first) excursions visit at least half 
of the boxes}\\
\text{$\{Q_r(v_i)\}_{i\le M}$, a fraction at least $\rho$ of their sites}
\end{array}
\right\}.
\end{equation*}
Assume for a moment that,
\begin{equation}\label{est-B}
\forall {\bf x} \in (\partial Q_{2R})^N,\qquad
\bP_{{\bf x}}(B)\ge 1/2.
\end{equation}
With $\sigma=\inf\{n\ge 1\, :\, S_n\in Q_{2R}\cup Q^c_{8R}\}$, we have
\begin{equation}\label{LB-1}
\bP(A\cap B,\, X={\bf x},\, Y={\bf y})=
\prod_{i=1}^{n-1} \bP_{y_i}\big(S(\sigma)=x_{i+1}\big)\cdot
\bP_{{\bf x}}(B,Y={\bf y}).
\end{equation}
Using Harnack's inequality (see \cite[Theorem 6.3.9]{LL}), for some constant $c_H>0$, for any $x\in \partial Q_{2R}$, 
\begin{equation}\label{LB-2}
\inf_{y}  \bP_y\big(S(\sigma)=x\big)\ge c_H 
\bP_{y^*}\big(S(\sigma)=x\big),\quad\text{with}\quad
y^*:=(4R,0,\dots,0).
\end{equation}
Thus, using that there is a positive lower bound (uniform in $R$) for 
the probability that a walk starting from $y^*$ 
hits $\partial Q_{2R}$ before $\partial Q_{8R}$, we have $c_1>0$,  such that
\begin{align}\label{AcapB}
\nonumber \bP(A\cap B) &=\sum_{{\bf x}}\sum_{{\bf y}}
\bP(A\cap B, X={\bf x},Y={\bf y}) \ge \sum_{{\bf x}}c_H^N
\prod_{i=1}^{n}\bP_{y^*}\big(S(\sigma)=x_i\big) \sum_{{\bf y}}
\bP_{{\bf x}}(B,Y={\bf y}) \\
& \ge  c_H^N \inf_{{\bf x}}\bP_{{\bf x}}(B) \sum_{{\bf x}} 
\prod_{i=1}^{n}\bP_{y^*}\big(S(\sigma)=x_i\big)
\ge \frac{c_H^N}{2}\prod_{i=1}^{n}\bP_{y^*}\big(S(\sigma)
\in B_{2R}\big)\ge e^{-c_1 N}.
\end{align}
Finally, define
\begin{equation*}
C:=\{\text{The walk makes at least $N$ excursions 
from $\partial Q_{4R}$ to $\partial Q_{2R}$ before time $T$}\}.
\end{equation*}
Using that on the event $A\cap C^c$, the walk spends 
a time at least $T$ in $Q_{8R}$,
we deduce that for some constant $c>0$, 
\begin{equation}\label{LB-3}
\bP(A\cap C^c)\le \exp(-c \frac{T}{R^2}).
\end{equation}
Then the proposition readily follows from 
\eqref{AcapB} and \eqref{LB-3}, once we choose $C_1c>2C_2c_1$
and use
\[
\bP\big( A\cap B\cap C\big)\ge \bP\big( A\cap B\big)-
\bP\big(A\cap C^c\big).
\]
We now prove \reff{est-B}.
We fix some ${\bf x}\in\partial Q^N_{2R}$, and
in the remaining part of the proof, we work under $\bP_{{\bf x}}$ .
We denote by $\cR^N$ the range produced by the $N$ excursions.
We note that it suffices to show that for $C_2$ large enough, 
one has for any $i\le M$,
\begin{equation}\label{LB-5}
\bP_{{\bf x}}\big(|\cR^N\cap Q_r(v_i)|> \rho |Q_r|\big)\ge 3/4.
\end{equation}
Indeed, letting $Z$ be the number of boxes whose fraction of visited sites exceeds $\rho$, \eqref{LB-5} shows that $\bE[Z] \ge (3/4) M$, and using also that $Z$ is bounded by $M$, 
it implies that $\bP(Z\le M/2)\le 1/2$, as wanted.

Thus, we are lead to prove \eqref{LB-5} for
$i\le M$. For a chosen $i\le M$, we introduce new notation.
Let $\NN_r$ be the number of excursions which hit $\partial Q_{2r}(v_i)$,
and $\G$ be the $\sigma-$field generated by $\NN_r$ 
and  the hitting points of $\partial Q_{2r}(v_i)$ by these excursions. Finally we let  
$\XX\subseteq Q_r(v_i)$ be the set of vertices visited by these excursions.
Since any vertex in $Q_r(v_i)$ has a probability 
of order $r^{2-d}$ to be visited by a walk starting 
from $\partial Q_{2r}(v_i)$, uniformely in its starting point, 
we have for some constant $c_0>0$, almost surely 
\begin{equation}\label{espX}
\bE_{{\bf x}}[|\XX|\mid \G] \ge 
\left(1 - (1-\frac{c_0}{r^{d-2}})^{\NN_r}\right)
\cdot|Q_r|\ge\Big(1-\exp\big(-c_0\frac{\NN_r}{r^{d-2}}\big)\Big)\cdot|Q_r|,
\end{equation}
using that $1-u \le e^{-u}$, for all $u\ge 0$.
We choose a large constant $K$ whose value will be fixed later. 
Since any excursion has a probability of order 
at least $(r/R)^{d-2}$ to hit $\partial Q_{2r}(v_i)$,
it is possible to choose $C_1$ (and $C_2$) so that 
for this chosen $K$,  
\begin{equation}\label{LB-6}
\bP_{{\bf x}}\big(\NN_r \ge K \rho r^{d-2}\big) \ge \sqrt{7/8}.
\end{equation}
We distinguish now a high and a low density regimes.
\paragraph{High density.} If $1-\exp(-c_0\ K\rho) \ge  \sqrt{7/8}$, then \eqref{espX} and 
\reff{LB-6} imply that 
\[
\bE_{{\bf x}}[|\XX|]\ge (7/8)\cdot |Q_r|.
\]
Using that $\XX\subseteq Q_r$, and as a consequence that $|\XX|$ is bounded by $|Q_r|$,  
we obtain with \reff{LB-5}, 
\[
\bP_{{\bf x}}(|\XX|< \rho |Q_r|)\le 
\bP_{{\bf x}}(|\XX|< |Q_r|/2) \le  1/4\qquad (\text{using}\
\rho<1/2). 
\]
\paragraph{Low density.} If $\rho$ is such that
$1-\exp(-c_0\ K\rho) < \sqrt{7/8}$,
then by \eqref{espX} we may choose $K$ large enough, so that 
on the event $\{\NN_r \ge K \rho r^{d-2}\}$, 
\begin{equation}\label{LB-7}
\bE_{{\bf x}}[|\XX|\mid \G]\ge \sqrt{K}\rho \cdot |Q_r|.
\end{equation}
Our strategy now is to use a second (conditional) moment method,
and show that the conditional variance of $\XX$ is small.
We denote with $\XX_1$ the set of pairs of vertices 
$(y,z)\in \XX\times \XX$, for which there exists an excursion going through both $y$ and $z$, and let $\XX_2$ be the complement of $\XX_1$ in 
$\XX\times \XX$ and note that $|\XX|^2=|\XX_1| + |\XX_2|$. 
Since for any $y\in Q_r(v_i)$ the mean number of 
vertices in $Q_r(v_i)$ which are visited by a walk starting 
from $y$ is of order $r^2$, one has for some constant $c>0$,
$$
\bE_{{\bf x}}[|\XX_1|\mid \G ]\le cr^2 \cdot \bE_{{\bf x}}[|\XX|\mid \G].
$$
Recalling $\rho r^{d-2}\ge 1$ and \eqref{LB-7}, 
it follows that on the event $\{\NN_r \ge K \rho r^{d-2}\}$, for a possibly larger constant $c$, 
\begin{equation}\label{XX1}
\bE_{{\bf x}}[|\XX_1|\mid \G]\le  \frac{c}{\sqrt K} 
\cdot \bE_{{\bf x}}[|\XX|\mid \G]^2.
\end{equation}
We can take $K$ such that $\sqrt K> 64c$.
It remains to bound the conditional mean of $|\XX_2|$ knowing $\G$. 
One can find a constant $K'>0$, such that
\[
\bP_{{\bf x}}(D) \ge 7/8,\quad\text{for}\quad
D:=\left\{ K \rho r^{d-2} \le \NN_r \le K' \rho r^{d-2}\right\}.
\]
We denote by $\cE_1,\dots,\cE_{\NN_r}$, the $\NN_r$ excursions hitting $Q_r(v_i)$ under $\bP_{{\bf x}}$. 
Fix some $y,z \in Q_r(v_i)$, and let
$$\cII_y:=\{k\le \NN_r\, :\, y\in \cE_k\}.$$
By definition, for any $k\le \NN_r$, 
$$\bP_{{\bf x}}(z\in \cE_k\mid \G,\, k\notin \cII_y) \le \frac{\bP_{{\bf x}}(z\in \cE_k\mid \G)}{\bP_{{\bf x}}(y\notin \cE_k\mid \G)} \le  \frac{\bP_{{\bf x}}(z\in \cE_k\mid \G)}{1-cr^{2-d}}\le \bP_{{\bf x}}(z\in \cE_k\mid \G) + \cO(r^{2(2-d)}) , $$
for some constant $c>0$.
As a consequence, on the event $D$, 
and for $r$ large enough,
\begin{align}\label{LB-8}
\nonumber \bP_{{\bf x}}\left(z\in \bigcup_{k\notin \cII_y} \cE_k\ \Big|\ \G ,\, \cII_y\right) & = 1-\prod_{k\notin \cII_y} \Big(1-\bP_{{\bf x}}(z\in \cE_k \mid \G, \,  \cII_y)\Big) \\
 \nonumber & \le 1-\prod_{k\notin \cII_y} \Big(1-\bP_{{\bf x}}(z\in \cE_k\mid \G) - \cO(r^{2(2-d)})\Big)\\
\nonumber & \le 1- \prod_{k\le \NN_r} \Big(1-\bP_{{\bf x}}(z\in \cE_k\mid \G)\Big) + \cO\left( \frac{\NN_r}{r^{2(d-2)}}\right)\\
& = \bP_{{\bf x}}(z\in \XX\mid \G) + \cO\left( \frac{\NN_r}{r^{2(d-2)}}\right),
\end{align}

where at the penultimate line we use that on $D$, 
and when $r$ is large enough, the term $\cO(\NN_r/r^{2(d-2)})$ 
can be made smaller than $1$.
Then on the event $D$, we get from \eqref{LB-8},
$$
\bP_{{\bf x}}((y,z) \in \XX_2\mid \G) \le  \left(\bP_{{\bf x}}(z\in \XX\mid \G) + \cO(\rho r^{2-d})\right) \cdot \bP_{{\bf x}}(y\in \XX\mid \G).$$
Summing over $y,z\in Q_r$, we deduce from \eqref{LB-7}, 
that on the event $D$,
$$
\bE_{{\bf x}}[|\XX_2|\mid\G ] \le \bE_{{\bf x}}[|\XX|\mid \G]^2 
(1+\cO(r^{2-d})).
$$
Combining this with \eqref{XX1}, we get for $r$ large enough,
$$
\var_{{\bf x}}(|\XX|\mid \G) = \bE_{{\bf x}}[|\XX_2|\mid\G ]+\bE_{{\bf x}}[|\XX_1|\mid\G ]-
\bE_{{\bf x}}[|\XX|\mid \G]^2
\le \frac{1}{32}\cdot \bE_{{\bf x}}[|\XX|\mid \G]^2.$$
Together with \eqref{LB-7}, it follows that for $r$ large enough, 
on the event $D$,
$$
\bP_{{\bf x}}(|\XX|\le \rho |Q_r|\mid \G) \le 
\bP_{{\bf x}}\left(|\XX|\le \frac 12\bE_{{\bf x}}[|\XX|\mid \G]\mid \G\right) \le \frac{4\var_{{\bf x}}(|\XX|\mid \G)}{\bE_{{\bf x}}[|\XX|\mid \G]^2} \le \frac 18.$$
Finally, using that $\bP_{{\bf x}}(D)\ge 7/8$, we obtain the desired
bound for $r$ large enough,
$$
\bP_{{\bf x}}(|\XX|\le \rho |Q_r|) \le 
\bP_{{\bf x}}(D^c) + \bP_{{\bf x}}(|\XX|\le \rho |Q_r|, \, D)\le 1/4,
$$
On the other hand, for small values of $r$, the result is immediate.
This concludes the proof of \eqref{est-B} and the Proposition.
\end{proof}

\noindent{\bf Aknowledgements} A.Asselah and B.Schapira
were supported by public grants overseen by the 
french National Research Agency, ANR SWiWS (ANR-17-CE40-0032-02) and ANR MALIN (ANR-16-CE93-0003).

\end{document}